\documentclass{amsart}

\usepackage{graphicx}
\usepackage{amssymb}
\usepackage[utf8]{inputenc}
\usepackage[T1]{fontenc}
\usepackage[british]{babel}
\usepackage[a4paper]{geometry}
\usepackage{amsmath,amssymb,amsthm}
\usepackage[shortlabels]{enumitem}
\usepackage[all]{xy}
\usepackage{tikz}
\usepackage{multicol}
\usepackage{float}
\usepackage{wrapfig}
\usepackage{comment} 

\usepackage{tikz-cd}

\usetikzlibrary{automata, arrows, arrows.meta, positioning}

\calclayout

\newcommand{\C}{\mathbb{C}}

\newcommand{\Sp}{\operatorname{Sp}}
\newcommand{\Mp}{\operatorname{Mp}}
\newcommand{\GSp}{\operatorname{GSp}}

\newcommand{\Char}{\operatorname{char}}

\newcommand{\gt}{\tilde{g}}
\newcommand{\htt}{\tilde{h}}

\newcommand{\ie}{{\it i.e.}\ }

\theoremstyle{plain}
\newtheorem{theo}{Theorem}[section]
\newtheorem{lem}[theo]{Lemma}
\newtheorem{cor}[theo]{Corollary}
\newtheorem{prop}[theo]{Proposition}
\theoremstyle{definition}
\newtheorem{defi}[theo]{Definition}

\newtheorem{rem}[theo]{Remark}

\theoremstyle{plain}
\newtheorem*{theo_sans_num}{Theorem}

\theoremstyle{definition}

\theoremstyle{plain}
\newtheorem{theorem}{Theorem}

\newtheorem{proposition}[theorem]{Proposition}

\title{The MVW involution of the metaplectic group}
\author{Shuichiro Takeda and Justin Trias}
\date{}

\begin{document}

\begin{abstract} The MVW involution -- named after Colette Moeglin, Marie-France Vignéras, and Jean-Loup Waldspurger -- is a fundamental dualizing involution in the representation theory of $p$-adic classical groups. It extends the well-known transpose–inverse automorphism for general linear groups. In this work, we establish the existence of the MVW involution for the metaplectic group over a non-archimedean local field $F$ of characteristic different from $2$ and with residue characteristic $p$. Our construction applies to representations over any coefficient field of characteristic distinct from $p$. \end{abstract}

\maketitle

\section*{Introduction}

Let $F$ be a non-archimedean local field of characteristic not $2$ and residue characteristic $p$. The main goal of this note is to provide a proof of the existence of the MVW involution for the metaplectic group. More precisely, let $(W,\langle \; , \; \rangle)$ be a nondegenerate symplectic space over $F$ and let $\Mp(W)$ be the metaplectic group, namely the unique non-trivial central extension of $\Sp(W)$ by $\mu_2$, so that we have
\[
1\longrightarrow\mu_2\longrightarrow \Mp(W)\longrightarrow \Sp(W)\longrightarrow 1.
\]
Let $\delta \in \textup{GSp}(W)$ be an element of similitude $-1$. The conjugation by $\delta$ on $\Sp(W)$ lifts uniquely to an automorphism of the group $\Mp(W)$, which we denote 
\[
\Mp(W)\longrightarrow\Mp(W),\quad \gt\mapsto {}^\delta\!\gt.
\]
Let $R$ be an algebraically closed field of characteristic $\ell \neq p$ and let $\textup{Irr}_R^\textup{gen}(\Mp(W))$ denote the isomorphism classes of smooth genuine irreducible representations of $\Mp(W)$. For $\pi \in \textup{Irr}_R^\textup{gen}(\Mp(W))$ we let $\pi^\vee$ be its contragredient and $\pi^\delta$ the precomposition with the automorphism defined by $\delta$, namely $\pi^{\delta}(\gt)=\pi({}^{\delta}\!\gt)$ for $\gt\in\Mp(W)$. The MVW involution asserts they are the same, as we prove in this paper:

\begin{theorem} \label{thm:main-theorem-R} For all $\pi \in \textup{Irr}_R^\textup{gen}(\Mp(W))$, we have $\pi^\delta \simeq \pi^\vee$. \end{theorem}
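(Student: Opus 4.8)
The plan is to derive Theorem~\ref{thm:main-theorem-R} from the Gelfand--Kazhdan criterion applied to a suitable anti-involution of $\Mp(W)$, and to reduce that, in turn, to a conjugacy statement refining the one used by Moeglin--Vignéras--Waldspurger for $\Sp(W)$. First I would observe that $\pi^\delta$ is, up to genuine isomorphism, independent of the chosen $\delta$ of similitude $-1$: any two such elements differ by an element of $\Sp(W)$, so the associated automorphisms of $\Mp(W)$ differ by an inner one. I may therefore take $\delta$ with $\delta^2=1$ (say $\delta$ acting by $1$ on a Lagrangian subspace and by $-1$ on a complement). As $\Sp(W)$ is perfect, the lift ${}^\delta$ to $\Mp(W)$ is unique and $({}^\delta)^2$ is the unique lift of $\mathrm{id}_{\Sp(W)}$, hence equal to $\mathrm{id}$; consequently
\[
\tau\colon\Mp(W)\longrightarrow\Mp(W),\qquad \gt\longmapsto{}^\delta(\gt^{-1}),
\]
is an \emph{anti}-involution of $\Mp(W)$ that fixes $\mu_2$ pointwise and covers $g\mapsto{}^\delta(g^{-1})$ on $\Sp(W)$. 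The conclusion $\pi^\vee\simeq\pi^\delta$ of the theorem is exactly the output of the Gelfand--Kazhdan criterion for $\tau$, since the automorphism $\gt\mapsto\tau(\gt)^{-1}$ that enters that output is ${}^\delta$ itself. (If $\ell=2$ there are no genuine irreducibles at all: by Schur's lemma the nontrivial element of $\mu_2$ would act by a scalar square root of $1$ different from $1$, which does not exist over $R$; so the theorem is vacuous and I assume henceforth $\ell\neq 2$, fixing an embedding $\mu_2\hookrightarrow R^\times$.)

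Next I would set up the genuine, $R$-coefficient version of the Gelfand--Kazhdan machinery. Its basic inputs --- an invariant $R$-valued measure on $\Mp(W)$, finite-rank operators $\pi(f)$, and double duality $(\pi^\vee)^\vee\simeq\pi$ for admissible $\pi$ (all smooth irreducibles of $\Mp(W)$ being admissible) --- are available because $\ell\neq p$ makes all pro-$p$ indices invertible in $R$, following Vignéras and the standard adaptation to covering groups. The criterion then reduces the theorem to a single geometric input: \emph{every genuine conjugation-invariant distribution on $\Mp(W)$ is $\tau$-invariant}. For this I would run Harish-Chandra descent for invariant distributions (again only $\ell\neq p$ is needed, though this step requires care over a field of positive characteristic): a genuine conjugation-invariant distribution is localized near semisimple classes and descended to centralizers and their Lie algebras, where, the central $\mu_2$ being inert in the nilpotent directions, the analysis on each $\mathfrak z(s)$ is the classical one. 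Thus it suffices that $\tau$ preserve the semisimple conjugacy classes of $\Mp(W)$ together with the relevant nilpotent orbits inside each centralizer; the latter follow from the Moeglin--Vignéras--Waldspurger conjugacy lemma applied inside the (classical-type) centralizers, whose underlying statement over $\Sp(W)$ is that $\delta g^{-1}\delta^{-1}$ is $\Sp(W)$-conjugate to $g$ for every $g$.

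The one genuinely new point --- and where I expect the main obstacle to be --- is to lift this conjugacy lemma through the $\mu_2$-cover without a sign. Given $h\in\Sp(W)$ with $\delta g^{-1}\delta^{-1}=hgh^{-1}$, one has $\tau(\gt)=\epsilon(g)\,\htt\,\gt\,\htt^{-1}$ with $\epsilon(g)\in\mu_2$ independent of the chosen lifts, and the task is to show $\epsilon(g)$ can always be cancelled. The crucial freedom is that replacing $h$ by $ht$ with $t\in Z_{\Sp(W)}(g)$ multiplies $\epsilon(g)$ by the metaplectic commutator $[\tilde t,\gt]$; so it is enough that, for every semisimple $g$ occurring in the descent, either $\epsilon(g)=1$ or the pairing $t\mapsto[\tilde t,\gt]$ on $Z_{\Sp(W)}(g)$ is nontrivial --- which for regular semisimple $g$ is precisely the (well-documented) noncommutativity of the corresponding metaplectic torus, and is only easier for nonregular $g$, since then the centralizer is larger; the finitely many ``metaplectically central'' elements are dealt with by hand. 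Any residual cases I would settle by an explicit computation with Rao's metaplectic $2$-cocycle, or by bootstrapping from the known identity $\omega_\psi^\delta\simeq\omega_{\psi^{-1}}\simeq\omega_\psi^\vee$ for the Weil representation, which pins down $\tau$ on a generating set of $\Mp(W)$. This chasing of the $\mu_2$-sign through the metaplectic cocycle is, to my mind, where the substance of the proof lies.
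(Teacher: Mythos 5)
There is a genuine gap, and it is exactly where you expect "the substance of the proof" to lie: the mechanism you propose for cancelling the $\mu_2$-sign $\epsilon(g)$ does not exist. You say that replacing $h$ by $ht$ with $t\in Z_{\Sp(W)}(g)$ multiplies $\epsilon(g)$ by the metaplectic commutator $[\tilde t,\tilde g]$, and that for regular semisimple $g$ this pairing is nontrivial ("noncommutativity of the metaplectic torus"). But a theorem of Moeglin--Vign\'eras--Waldspurger (\cite[Chap.\ 2, II.5.\ Lemme]{mvw}, restated in the paper as Lemma~\ref{L:two_elements_commute}) says that for the cover $\Mp(W)\to\Sp(W)$ two elements commute upstairs \emph{if and only if} they commute downstairs; in other words, the commutator pairing $t\mapsto[\tilde t,\tilde g]$ on $Z_{\Sp(W)}(g)$ is \emph{always} trivial. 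So there is no freedom to adjust $\epsilon(g)$ at all, and the proposal has no proof of the key conjugacy statement (Proposition~\ref{P:conjugacy_assertion_in_Mp}). In fact the paper uses this same commuting lemma in the \emph{opposite} direction: knowing $\delta^{-1}\gamma$ commutes with $g$ forces its lift to commute with $\tilde g$, which forces $\epsilon=1$. The actual work is then to produce, for a general $\tilde g$, a second conjugating element $\gamma$ for which one already knows the sign is right; the paper does this via a Jordan decomposition in $\Mp(W)$ (unique splitting over unipotent subgroups plus the known semisimple case), and, when the Jordan decomposition does not exist over $F$, a passage to a purely inseparable extension $F_g/F$ of odd degree using the pullback result \cite[Theorem 6.32]{theta_book}.

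Two further structural problems in the proposal: (i) Harish-Chandra descent for invariant distributions is not available in the $\ell$-modular setting (and in particular there is no analogue of the regularity theorem even when $\operatorname{char} F=0$), which is precisely the regime where the theorem is new; saying "this step requires care" does not supply an argument, and the paper deliberately avoids this route. (ii) Running the Gelfand--Kazhdan criterion directly for $\Mp(W)$ requires the conjugation action of $\Mp(W)$ on itself to be regular (or at least constructive), which is not known --- the paper states explicitly it could not determine this. Section~\ref{sec:actions-and-stratification} of the paper is designed to sidestep this: it introduces finite Matryoshka-regular stratifications and shows that a regular stratification on $\Sp(W)$ (known by Gelfand--Kazhdan) pulls back to a Matryoshka-regular one on $\Mp(W)$, so that Proposition~\ref{P:invariant-distributions-Matrysohka} applies. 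Your reduction to "every genuine conjugation-invariant distribution is $\tau$-invariant" is the right target, but both the global framework you invoke and the local sign-chasing that you (correctly) flag as the crux are missing.
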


In particular, our main theorem applied to the case $R=\mathbb{C}$ is an improvement of the already existing result:

\begin{theo_sans_num}[{\cite[Chap 4, II.2]{mvw}}] Let $\pi \in \textup{Irr}_\mathbb{C}^\textup{gen}(\Mp(W))$. Suppose the trace-character of $\pi$ is given by a locally integrable function. Then $\pi^\delta \simeq \pi^\vee$. \end{theo_sans_num}

This type of theorem is usually proved by showing that the trace-characters of $\pi^\delta$ and $\pi^\vee$ are equal, which implies the theorem by the linear independence of trace-characters. When $F$ has characteristic zero and $G$ is a connected reductive group over $F$, trace-characters are easier to study due to Harish-Chandra's regularity theorem. This theorem ensures that the trace-character of an irreducible representation is given by a locally integrable function, supported on the set of regular semisimple elements of $G$. Therefore it is usually enough to check the equality on those semisimple elements solely. Wen Wei Li \cite[Th 4.3.2.]{wwl} proved that Harish-Chandra's regularity theorem holds for the metaplectic group, so this assumption in the theorem can be relaxed when $F$ is a $p$-adic field. When $F$ has positive characteristic, no analog of the regularity theorem is known and one usually has to consider other techniques from invariant distributions, so our main theorem is truly new in this case. Most of the dualizing involutions in positive characteristic -- such as the transpose-inverse for $\textup{GL}_n(F)$ or the MVW involution for classical groups (see \cite[Chap 4, II]{mvw} and \cite[Sec 13.4]{theta_book}) -- are obtained from two related methods. The first one is due to Gelfand and Kazhdan \cite{gk} and requires the action to be \textit{regular}. The second one is due to Bernstein and Zelevinski \cite{bz} and generalizes the results of Gelfand-Kazhdan to more general actions called \textit{constructive} actions.

From the point of view of complex representations, the second method encompasses the first one. However, from the point of view of $\ell$-modular representations, \textit{i.e.} replacing the coefficient field $\mathbb{C}$ by a field of positive characteristic $\ell \neq p$ such as $\overline{\mathbb{F}_\ell}$, it is no longer true because, typically, the second method does not supersede the first. Indeed, the first method puts more restrictions on the action but usually works for all characteristics $\ell$ \cite[Prop 2.5 \& Thm 3.7]{modular_theta}. The second one has opposite benefits in the sense that you can consider more general actions, at the price of excluding some values of $\ell$ \cite[Prop 2.4 \& Thm 3.6]{modular_theta}. Moreover, local integrability does not make sense in the modular setting. As a consequence of this fact, there is no modular analog of Harish-Chandra's regularity theorem at all, even when $F$ has characteristic zero, so the two methods we mentioned -- Gelfand-Kazhdan and Bernstein-Zelevinski -- are the only tools available to exhibit dualizing involutions. 

Regular and constructive actions can be rephrased using the language of finite stratifications; they correspond to finite stratifications where each quotient stratum is Hausdorff or locally Hausdorff, respectively. By translating properties of actions in these terms, we can highlight other types of actions that almost sit between regular and constructive, such as actions with finite regular Matryoshka\footnote{The term Matryoshka comes from the Russian doll that is nested in one another, and is due to the second author in \cite{modular_theta}.} strata (see Section \ref{def:finite-Matryoshka-regular-strata}). These actions seem very well-behaved for questions related to covering groups, which include the metaplectic group. For a covering group $1 \to A \to \widetilde{G} \to G \to 1$ where $A$ is a finite abelian group, these finite Matryoshka stratifications package in a nice way the pullback of a finite stratification on $G$ à la Gelfand-Kazhdan. This allows to extend their methods to covering groups provided the conjugation action on $G$ is regular, which is our main strategy in this paper. (We were not able to determine whether the conjugation action of the metaplectic group on itself is constructive.)

The following is an outline of this paper. In Section 1, we will define the notion of Matryoshka regular actions and explain how the existence of the MVW involution is reduced to the following key fact.
\begin{proposition}\label{P:conjugacy_assertion_in_Mp} For all $\tilde{g} \in \Mp(W)$, the elements ${}^{\delta}\!\tilde{g}$ and  $\tilde{g}^{-1}$ are conjugate in $\Mp(W)$. \end{proposition}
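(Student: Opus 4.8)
Since any two elements of $\GSp(W)$ of similitude $-1$ differ by an element of $\Sp(W)$, replacing $\delta$ by another such element changes the automorphism $\tilde g\mapsto{}^\delta\tilde g$ only by an inner automorphism of $\Mp(W)$; we are therefore free to choose $\delta$ conveniently, and in particular we may assume $\delta^2=1$. The plan is to reduce to $\Sp(W)$ and then control a single sign. By the symplectic case of \cite[Chap.~4, II]{mvw}, for every $g\in\Sp(W)$ the elements ${}^\delta g$ and $g^{-1}$ are conjugate in $\Sp(W)$; fix $h\in\Sp(W)$ with $h\,({}^\delta g)\,h^{-1}=g^{-1}$ and lifts $\tilde g,\tilde h\in\Mp(W)$ of $g,h$. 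Then $\tilde h\,({}^\delta\tilde g)\,\tilde h^{-1}$ and $\tilde g^{-1}$ lie above the same element $g^{-1}$, so $\tilde h\,({}^\delta\tilde g)\,\tilde h^{-1}=c(g)\,\tilde g^{-1}$ for a unique $c(g)\in\mu_2$; one checks that $c(g)$ is independent of the chosen lifts and of the chosen $h$, so $c$ is a well-defined class function $\Sp(W)\to\mu_2$ and the Proposition amounts to $c\equiv1$. More flexibly, replacing $\tilde h$ by $\tilde z\tilde h$ with $z\in Z_{\Sp(W)}(g)$ multiplies $c(g)$ by the commutator $[\tilde z,\tilde g]\in\mu_2$, so it is in fact enough to show that $c(g)$ lies in the image of the commutator pairing $z\mapsto[\tilde z,\tilde g]$ on $Z_{\Sp(W)}(g)$.

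\textbf{Reduction to blocks.} Regarding $W$ as a module over $F[X,X^{-1}]$ via $g$ and taking the primary decomposition, with the root $\lambda$ paired to $\lambda^{-1}$, one gets an orthogonal decomposition $W=\bigoplus_i W_i$ into nondegenerate $g$-stable symplectic subspaces of three kinds: (a) $g|_{W_i}$ has no eigenvalue $\pm1$ and lies in a Siegel Levi $\cong\mathrm{GL}_{k_i}$ of $\Sp(W_i)$; (b) $g|_{W_i}$ is unipotent; (c) $g|_{W_i}=-u_i$ with $u_i$ unipotent. Choosing $\delta=\bigoplus_i\delta_i$ block-diagonal with $\delta_i\in\GSp(W_i)$ of similitude $-1$, and applying \cite[Chap.~4, II]{mvw} inside each $\Sp(W_i)$, we may take $h=\bigoplus_i h_i$ block-diagonal. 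The metaplectic cover of $\Sp(W)$ restricts on $\prod_i\Sp(W_i)$ to the amalgam over $\mu_2$ of the covers $\Mp(W_i)$, so $c(g)=\prod_i c_i(g|_{W_i})$ and it suffices to treat each block type.

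\textbf{The three cases.} The metaplectic cover splits over every unipotent subgroup $U\subset\Sp(W)$, and the splitting is unique because $\mathrm{Hom}(U(F),\mu_2)=1$ when $\Char F\neq2$; hence every unipotent element has a canonical lift, and this assignment is preserved by any group automorphism of $\Mp(W)$ lying over an automorphism of $\Sp(W)$, by conjugation, and by inversion. For a block of type (b), taking $\tilde g$ to be the canonical lift of $g|_{W_i}$, the element $\tilde h_i\,({}^{\delta_i}\tilde g)\,\tilde h_i^{-1}$ is the canonical lift of $h_i\,({}^{\delta_i}g|_{W_i})\,h_i^{-1}=(g|_{W_i})^{-1}$, which is $\tilde g^{-1}$; so $c_i=1$. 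For a block of type (c), write $g|_{W_i}=(-I_{W_i})u_i$; since $\Sp(W_i)$ is perfect the lift $\widetilde{-I_{W_i}}$ is central in $\Mp(W_i)$, and a short computation with $\tilde g=\widetilde{-I_{W_i}}\,\widehat{u_i}$ (where $\widehat{u_i}$ is the canonical lift of $u_i$) gives $c_i=1$ provided ${}^{\delta_i}\widetilde{-I_{W_i}}=\widetilde{-I_{W_i}}^{-1}$ — that is, conjugation by a similitude-$(-1)$ element inverts the metaplectic lift of $-I$. This identity is verified in a Schrödinger model of the Weil representation, in which $\widetilde{-I_{W_i}}$ acts by the parity operator scaled by a square $\gamma(\psi)^2$ of a Weil index, while passing to a similitude-$(-1)$ element replaces $\psi$ by $\overline{\psi}$ and hence $\gamma(\psi)^2$ by its inverse. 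For a block of type (a), the cover restricts to the Siegel Levi $\mathrm{GL}_{k_i}$ as the extension given by the quadratic Hilbert symbol of determinants; there the symplectic conjugacy comes down to the classical fact that $A$ is conjugate to its transpose in $\mathrm{GL}_{k_i}$, and running this through the cocycle leaves a residual sign $(\det h_i,-1)_F$, which is absorbed either by a better choice of $h_i$ or by the commutator pairing on $Z_{\Sp(W_i)}(g|_{W_i})$, whose restriction to $Z_{\mathrm{GL}_{k_i}}(A)$ is again a Hilbert symbol of determinants.

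\textbf{Main obstacle.} The linear-algebraic input — conjugating ${}^\delta g$ to $g^{-1}$ inside $\Sp(W)$, and the primary decomposition — is standard; the real work is the genuinely metaplectic bookkeeping in the previous paragraph, especially the identity ${}^\delta\widetilde{-I}=\widetilde{-I}^{-1}$ and the Hilbert-symbol computations on Siegel Levis, since this is exactly where the sign $c(g)$ can a priori be nontrivial and where abstract arguments do not suffice, forcing the use of an explicit cocycle (Rao/Leray) or the Weil representation. Keeping track of the compatibilities between the block decomposition, the amalgam structure of the cover, and the automorphism ${}^\delta$ is the part most demanding of care.
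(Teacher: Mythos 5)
Your strategy — reduce to a sign $c(g)\in\mu_2$, decompose $W$ into $g$-stable blocks, and compute block by block — is quite different from the paper's. The paper instead factors $g = su$ via the Jordan decomposition, quotes the semisimple case as a black box (\cite[Proposition 13.38]{theta_book}), handles the unipotent part by uniqueness of the splitting $U\to\Mp(W)$, shows the two lifts agree by the commutation lemma (commuting elements of $\Sp(W)$ have commuting lifts in $\Mp(W)$), and in positive characteristic passes to a purely inseparable scalar extension $F_g/F$, which has odd degree, so that $\Mp_{2n}(F)$ sits inside $\Mp_{2n}(F_g)$. This cleanly avoids any cocycle computation and any primary decomposition. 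Your approach is closer in spirit to the one used to prove the semisimple case itself; nothing wrong with that in principle, but it means you are re-deriving the hard input rather than leveraging it.

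There are genuine gaps in the execution. First, your case analysis of the primary decomposition is incomplete: a $g$-stable nondegenerate block whose associated irreducible factor $P\in F[X,X^{-1}]$ is self-dual but different from $X\pm1$ (the ``elliptic'' blocks; e.g.\ an elliptic element of $\mathrm{SL}_2$) lies in none of your types (a), (b), (c) — in particular it does \emph{not} lie in a Siegel Levi. In MVW these blocks are treated by reduction to unitary groups of the commutant field, and they require their own metaplectic computation. Second, your ``more flexible'' fallback is vacuous: by the commutation lemma \cite[Chap.\ 2, II.5.\ Lemme]{mvw} (Lemma~\ref{L:two_elements_commute} in this paper), if $z$ centralizes $g$ in $\Sp(W)$ then $\tilde z$ and $\tilde g$ already commute in $\Mp(W)$, so the pairing $z\mapsto[\tilde z,\tilde g]$ is identically trivial, and ``$c(g)$ lies in the image of the commutator pairing'' is just ``$c(g)=1$'', which is circular; in particular you cannot use it to absorb the residual Hilbert-symbol sign in case (a), and that computation must actually be carried out (or the better-choice-of-$h_i$ route must be verified, which is not automatic when $\det$ of the centralizer is a proper subgroup of $F^\times$). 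Third, even granting the sign analysis, you have not addressed the separability issue: in $\Char F = p$ the Jordan decomposition need not exist over $F$, so the unipotent/central splitting in types (b),(c) only exists after a scalar extension, which is exactly the point the paper's odd-degree argument is designed to handle. Your identification of the unipotent splitting and the $\widetilde{-I}$ computation as the crux is a reasonable instinct, but as written the proof has a missing case and relies on a circular reduction.
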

Our Section \ref{sec:conjugacy-classes-metaplectic-group} is devoted to a proof of this theorem. The basic idea is to lift the $\Sp(W)$-analog of this proposition by using the ``Jordan decomposition'' in $\Mp(W)$. Finally in Section 3, we will finish our proof.

\quad

Let us note that, besides the classical groups treated by Moeglin, Vign\'{e}ras and Waldspurger \cite{mvw} (\ie symplectic, orthogonal and unitary) in the complex setting, and by the second author \cite{modular_theta} in the $\ell$-modular setting, the groups for which the MVW-involution is known to exist (so far only in the complex case) include the following: the general Spin groups (and their variants) \cite{emory_takeda, Modal_Nadimpalli}, certain double covers of $\operatorname{GL}(2)$ \cite{BT}, and the similitude groups in odd residue characteristic \cite{roche_vinroot}. Also there is a simple proof for the general linear groups due to Tupan \cite{tupan}, whose method is empolyed in \cite{BT, roche_vinroot}. It should be also mentioned that, for quaternionic classical groups over $F$, no MVW involution exists, as shown by Lin-Sun-Tan \cite[Remarks (a)]{Lin-Sun-Tan}.

\quad

\section*{Notation and assumptions}
Throughout the paper, $F$ will be a non-archimedean local field of $\Char(F)\neq 2$ and residue characteristic $p$. We fix a nondegenerate symplectic space $W$ of $\dim_FW=2n$. We denote by $p:\Mp(W)\to\Sp(W)$ the canonical projection. (We believe that the reader will not confuse $p$ here with the residue characteristic of $F$.) For each $g\in Sp(W)$, we denote by $\gt$ an element in $\Mp(W)$ such that $p(\gt)=g$; certainly there are two choices for $\gt$, but we will use this notation whenever the ambiguity does not produce any confusion. 

Let $G$ be an arbitrary group. For each $g\in G$, we denote by $C_g$ the conjugacy class of $g$ in $G$.

\quad

\section*{Acknowledgments} 
The first author was partially supported by JSPS KAKENHI Grant Number 24K06648. The second author was partially supported by the EPSRC Grant EP/V061739/1. This paper was mostly completed while both authors were visiting the National University of Singapore in September 2025, whose hospitality we gratefully acknowledge. We also wish to thank Wee Teck Gan and Shaun Stevens for their stimulating advices and encouragement.

\section{Finite Matryoshka regular actions} \label{sec:actions-and-stratification}

In this section, we go over the notion of Matryoshka regular action, which was originally introduced in \cite{modular_theta} and apply it to our situation.

\subsection{Review of different actions} 
Let $X$ be a locally profinite topological space. We say an equivalence relation $\sim$ on $X$ is {\it regular} if $X\slash\!\sim$ equipped with the quotient topology is Hausdorff.

Let $G$ be a locally profinite group that is countable at infinity. Assume $G$ acts on $X$ continuously. We say that the action admits a {\it finite stratification} if there there exists a partition $X=\sqcup_{i \in I} X_i$ with $I$ finite such that
\begin{itemize}
\item $I$ is a totally ordered set;
\item each $X_i$ is $G$-stable;
\item $X_{\leq j} = \sqcup_{i \leq j} X_i$ is open for all $j \in I$. 
\end{itemize}
We call each $X_i$ a stratum of the finite stratification.

\begin{defi} We say the action of $G$ on $X$ admits a {\it finite regular stratification} if there exists a finite stratification $X=\sqcup_{i \in I}X_i$ such that each $X_i/G$ is Hausdorff. \end{defi}

By the results of \cite[Sec 2.1]{modular_theta}, a regular action in the sense of Gelfand-Kazhdan is an action which admits a finite regular stratification; also a constructive action in the sense of Bernstein-Zelevinski is an action which admits a finite stratification $X=\sqcup_{i \in I} X_i$ such that each $X_i\slash G$ is locally Hausdorff.

Given an equivalence relation $\sim$ on $X$, we call each equivalence class a $\sim$-packet. We then define

\begin{defi} \label{def:finite-Matryoshka-regular-strata} The action of $G$ on $X$ is {\it Matryoshka regular} if there exists a finite sequence of equivalence relations $(\sim_i)_{0, \dots, n}$ on $X$ such that
\begin{itemize}
\item $\sim_0$ is the trivial (least refined) relation (\text{i.e.} all elements are related), so there is only one $\sim_0$-packet, namely $X$;
\item $\sim_n$ is the $G$-action on $X$, so each $\sim_n$-packet is a $G$-orbit;
\item $\sim_{i+1}$ refines $\sim_i$; namely for $x, y\in X$, if $x\sim_{i+1} y$ then $x\sim_iy$;
\item the restriction of $\sim_{i+1}$ on each $\sim_i$-packet $\mathcal{P}_i$ is regular, so the quotient $\mathcal{P}_i / \hspace{-0.15cm} \sim_{i+1}$ is Hausdorff.
\end{itemize} 
Further, we say the action of $G$ on $X$ admits a finite Matryoshka regular stratification if it admits a finite stratification $X=\sqcup_{i \in I} X_i$ such that the action of $G$ on each $X_i$ is Matryoshka regular.
\end{defi}

If the action of $G$ on $X$ admits a finite Matryoshka regular stratification given by a sequence of equivalence classes $(\sim_i)_{i=0,\dots, n}$, then each $\sim_i$-packet is a union of some $\sim_{i+1}$-packets, so each packet has a nested structure like the Matryoshka doll.

It should be noted that a finite regular stratification is a special case of finite Matryoshka regular stratification with $n=1$. 

\subsection{Invariant distributions} Let $R$ be an algebraically closed field of characteristic $\ell \neq p$ and let $C_c^\infty(X,R)$ be the space of locally constant compactly supported $R$-valued functions on $X$. A distribution on $X$ is an $R$-linear form on $C_c^\infty(X,R)$. Homeomorphisms of $X$ naturally act on $C_c^\infty(X,R)$ as well as on the space of distributions. Our main theorem will be a consequence of: 

\begin{prop}\label{P:invariant-distributions-Matrysohka} 
Assume the action of $G$ on $X$ admits a finite Matryoshka regular stratification. Suppose that $\sigma : X \to X$ is a homeomorphism satisfying:
\begin{itemize}
\item if there exists a non-trivial $G$-invariant distribution $T$ on a packet $S$, then $\sigma(S)=S$ and $\sigma \cdot T = T$.
\end{itemize}
Then any $G$-invariant distribution on $C_c^\infty(X,R)$ is $\sigma$-invariant.
\end{prop}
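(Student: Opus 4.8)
The plan is to argue by a double induction: an outer induction on the strata of the finite Matryoshka regular stratification, reducing to a single stratum $X_i$, and then an inner induction along the chain of equivalence relations $\sim_0, \dots, \sim_n$ on that stratum. First I would handle the outer layer. Since $X = \sqcup_{i \in I} X_i$ with $I$ finite and totally ordered and each $X_{\leq j}$ open, one has short exact sequences of the form $0 \to C_c^\infty(X_{< j}, R) \to C_c^\infty(X_{\leq j}, R) \to C_c^\infty(X_j, R) \to 0$ (extension by zero on the open part, restriction to the closed stratum), which dualize to exact sequences of distribution spaces compatible with the $G$-action and with $\sigma$ (note $\sigma$ need not preserve each $X_i$ a priori, but it permutes the packets, and each $X_i$ is a union of packets — so one should first check $\sigma$ preserves the stratification, or alternatively refine the stratification so that it does; this is the kind of bookkeeping I would want to nail down carefully). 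Granting that, a standard dévissage shows it suffices to prove the statement when $X = X_i$ is a single Matryoshka regular stratum, i.e.\ when there is a chain $(\sim_i)_{i=0,\dots,n}$ as in Definition \ref{def:finite-Matryoshka-regular-strata}.

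For the inner induction, I would prove by downward induction on $k$ (from $k = n$ down to $k = 0$) the statement: every $G$-invariant distribution $T$ on $X$, when restricted to (the function space supported on) any single $\sim_k$-packet $\mathcal{P}_k$, is $\sigma$-invariant there. The base case $k = n$: a $\sim_n$-packet is a $G$-orbit $S$; if there is no non-trivial $G$-invariant distribution on $S$ the restriction of $T$ is zero hence trivially $\sigma$-invariant, and if there is one then by hypothesis $\sigma(S) = S$ and $\sigma \cdot T = T$ already as distributions on $S$. For the inductive step, fix a $\sim_k$-packet $\mathcal{P}_k$; by hypothesis the restriction of $\sim_{k+1}$ to $\mathcal{P}_k$ is regular, so the quotient map $q\colon \mathcal{P}_k \to \mathcal{P}_k/\!\!\sim_{k+1}$ has Hausdorff (indeed locally profinite) target. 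I would invoke the localization principle for distributions (in the $R$-coefficient, locally profinite setting — this is exactly the mechanism behind Gelfand--Kazhdan and Bernstein--Zelevinski): a distribution on $\mathcal{P}_k$ which is invariant under the fibres (here: annihilated against functions vanishing near a fibre) is determined by its restrictions to the fibres $q^{-1}(\xi)$, which are precisely the $\sim_{k+1}$-packets inside $\mathcal{P}_k$. A $G$-invariant distribution has this fibrewise-local property because $G$ preserves each $\sim_{k+1}$-packet (as $\sim_{k+1}$ refines the $G$-orbit relation $\sim_n$... wait, rather: $G$-orbits refine $\sim_{k+1}$, so each $\sim_{k+1}$-packet is $G$-stable), hence $T$ restricted to $\mathcal{P}_k$ is recovered from its restrictions to the $\sim_{k+1}$-packets; since $\sigma$ permutes these packets and, by the inductive hypothesis, acts trivially on the restriction of $T$ to each of them, $\sigma \cdot T = T$ on $\mathcal{P}_k$. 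Taking $k = 0$ gives the single $\sim_0$-packet $X$ itself, which is the desired conclusion.

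The main obstacle I expect is making the localization/gluing step fully rigorous in this topological generality and over a field $R$ of positive characteristic. Concretely, one needs: (i) the right form of the "support on a closed subset" short exact sequence when the subset is a $\sim_{k+1}$-packet that is closed in $\mathcal{P}_k$ but $\mathcal{P}_k$ itself may only be a packet, not open in $X$; (ii) a localization principle saying a distribution on a locally profinite space fibred over a Hausdorff (locally profinite) base, which is local over the base, glues from its fibre restrictions — the subtle point being uniform control so that a globally defined $G$-invariant $T$ really does restrict compatibly, and that the gluing does not require any integration (so $\ell$-modular coefficients are fine). I would lean on \cite{modular_theta} for exactly these statements, since regular actions there are characterized via such stratifications; the secondary, more mechanical obstacle is the compatibility of $\sigma$ with the whole nested structure, which I would resolve by first reducing to the case where $\sigma$ stabilizes every stratum (refining $I$ if necessary, using that $\sigma$ has finite order on the strata because $I$ is finite).
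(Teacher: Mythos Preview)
The paper gives no argument here beyond the single-line citation ``This is \cite[Prop 2.10]{modular_theta}'', so there is no in-paper proof to compare against. Your outline --- d\'evissage over the finite stratification, then iterated localization along the Hausdorff quotients $\mathcal{P}_k/\!\sim_{k+1}$ --- is the right shape for what lies behind that reference, and you are right that \cite{modular_theta} is where the $R$-coefficient localization lemmas live.

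One structural point is worth flagging. Your inner induction tracks $\sigma$-invariance level by level and therefore needs $\sigma$ to permute the intermediate $\sim_{k+1}$-packets; but the hypothesis only pins down $\sigma$ on $G$-orbits carrying a nonzero invariant distribution, and your proposed fix of refining $I$ does not touch the (possibly infinitely many) intermediate packets. The cleaner route is to decouple $\sigma$ from the localization entirely: set $D = T - \sigma\cdot T$, which is again $G$-invariant (granted that $\sigma$ normalizes the $G$-action --- implicit here and certainly true in the intended application), check from the hypothesis that $D$ restricts to zero on every $G$-orbit, and then prove the $\sigma$-free statement that a $G$-invariant distribution on a Matryoshka regular stratum which vanishes on every $G$-orbit is zero. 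That last assertion is exactly what the chain $\sim_0,\dots,\sim_n$ with successive Hausdorff quotients is designed to deliver, and its proof needs no compatibility of $\sigma$ with strata or intermediate packets; your worries about $\sigma$-stability of the stratification then disappear.
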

\begin{proof}
    This is \cite[Prop 2.10] {modular_theta}
\end{proof}

\subsection{Covering groups} Let $G$ be a connected reductive group over $F$ and $A$ a finite abelian group. Let $\widetilde{G}$ be a central $A$-extension of $G$, so that we have
\[
1 \longrightarrow A \longrightarrow \widetilde{G} \xrightarrow{\;p\;} G\longrightarrow 1,
\]
where we denote by $p : \widetilde{G} \to G$ the projection map. In particular $\widetilde{G}$ is a locally profinite group that is countable at infinity. 

\begin{prop} If the conjugation action of $G$ on $G$ admits a finite regular stratification, then the conjugation action of $\widetilde{G}$ on $\widetilde{G}$ admits a finite Matryoshka regular stratification. \end{prop}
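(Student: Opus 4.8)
The plan is to pull back a finite regular stratification of the conjugation action on $G$ to $\widetilde{G}$ and to interpose a single equivalence relation that records the $G$-conjugacy class of the image. Fix a finite regular stratification $G=\bigsqcup_{i\in I}G_i$ for conjugation on $G$, and put $\widetilde{G}_i:=p^{-1}(G_i)$. Since $A$ is central, conjugation in $\widetilde{G}$ projects under $p$ to conjugation in $G$, so each $\widetilde{G}_i$ is stable under conjugation by $\widetilde{G}$; and $\bigsqcup_{i\le j}\widetilde{G}_i=p^{-1}\!\left(\bigsqcup_{i\le j}G_i\right)$ is open by continuity of $p$. Hence $\widetilde{G}=\bigsqcup_{i\in I}\widetilde{G}_i$ is a finite stratification for the conjugation action on $\widetilde{G}$, and it remains to equip each $\widetilde{G}_i$ with a Matryoshka regular structure. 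On $\widetilde{G}_i$ I would take relations $\sim_0\,\supseteq\,\sim_1\,\supseteq\,\sim_2$ with $\sim_0$ trivial, $\sim_2$ the conjugation action of $\widetilde{G}$, and $\tilde{x}\sim_1\tilde{y}$ whenever $p(\tilde{x})$ and $p(\tilde{y})$ are conjugate in $G$; the containments and the fact that $\sim_0$ and $\sim_2$ are the required extreme relations are immediate (that $\sim_2$ refines $\sim_1$ uses only that $p$ is a homomorphism). Two regularity conditions remain. For the single $\sim_0$-packet $\widetilde{G}_i$, the map $p$ induces a continuous bijection $\widetilde{G}_i/{\sim_1}\to G_i/G$ which is a homeomorphism because $p$ is open and surjective, and the target is Hausdorff since the stratification of $G$ is regular.

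The remaining condition is the heart of the matter. Each $\sim_1$-packet of $\widetilde{G}_i$ has the form $\mathcal{P}=p^{-1}(C_g)$ for a single conjugacy class $C_g\subseteq G_i$, and I must show $\mathcal{P}/\widetilde{G}$ is Hausdorff. The algebra is transparent: fixing $\tilde{g}\in p^{-1}(g)$, conjugating $\tilde{g}$ by a lift of $h\in Z_G(g)$ produces $c(h)\,\tilde{g}$ for a well-defined element $c(h)\in A$ (independent of the choices of lifts, as $A$ is central), and $h\mapsto c(h)$ is a homomorphism $Z_G(g)\to A$ whose image I call $A_g$. Since every element of $p^{-1}(C_g)$ is $\widetilde{G}$-conjugate into the fibre $p^{-1}(g)=A\tilde{g}$, one obtains $p^{-1}(C_g)=\bigsqcup_{\bar{a}\in A/A_g}C_{\bar{a}\tilde{g}}$, a disjoint union of at most $|A|$ conjugacy classes. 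A finite quotient space is Hausdorff exactly when it is discrete, i.e.\ when each $C_{\bar{a}\tilde{g}}$ is closed in $p^{-1}(C_g)$; translating by $A$, it suffices to treat $C_{\tilde{g}}$.

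Here is where the one genuinely topological input enters. Because the stratification of $G$ is regular, $C_g$ is closed in $G_i$, hence locally closed in $G$; as $p$ is continuous, $p^{-1}(C_g)$ is then locally closed in $\widetilde{G}$. Now let $\widetilde{G}\times A$ act on $\widetilde{G}$ by conjugation through the first factor and left translation through the second; then $p^{-1}(C_g)$ is exactly the orbit of $\tilde{g}$, and since both factors are locally profinite and countable at infinity, the standard Baire-category argument for continuous actions of locally profinite groups identifies this orbit $\widetilde{G}$-equivariantly with $(\widetilde{G}\times A)/H$, where $H$ is the stabiliser of $\tilde{g}$. Computing the double quotient $\widetilde{G}\backslash(\widetilde{G}\times A)/H$ then gives $A/A_g$ with the discrete topology, so $\mathcal{P}/\widetilde{G}$ is Hausdorff. (Equivalently, and more hands-on: if $a\tilde{g}\in\overline{C_{\tilde{g}}}$, approximate by a net $\tilde{h}_\alpha\tilde{g}\tilde{h}_\alpha^{-1}$; projecting and using that $C_g\cong G/Z_G(g)$ homeomorphically, write $h_\alpha=k_\alpha z_\alpha$ with $k_\alpha\to e$ and $z_\alpha\in Z_G(g)$, lift the $k_\alpha$ through a local continuous section of $p$ near $e$ — such sections exist because $p$ is a central extension of a locally profinite group by a finite group — and deduce $c(z_\alpha)\to a$; as $A_g$ is finite this forces $a\in A_g$, so $a\tilde{g}\in C_{\tilde{g}}$, and conjugating handles the other fibres.) This verifies the fourth Matryoshka condition, completing the proof.

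I expect the main obstacle to be precisely this last step: one must rule out that the finitely many $\widetilde{G}$-conjugacy classes lying over a fixed conjugacy class $C_g$ collide topologically inside $p^{-1}(C_g)$, and this rests on the local closedness of $C_g$ — which is exactly what the regularity of the stratification of $G$ provides — together with the existence of local continuous sections of $p$. Everything else (pulling back the stratification, the intermediate relation $\sim_1$, and the $\sim_0$-level check) is formal.
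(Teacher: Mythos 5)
Your proof is correct and follows the same overall plan as the paper's: pull back the stratification along $p$, interpose the relation $\sim_1$ recording the $G$-conjugacy class of the image, check $\widetilde{G}_i/\!\sim_1\;\cong G_i/G$ is Hausdorff by hypothesis, and reduce the last Matryoshka condition to showing that the finite set $p^{-1}(C_g)/\widetilde{G}$ is discrete. The only divergence is in how that discreteness is justified: the paper invokes \cite[Prop 1.5]{bz} to produce one open point and then uses the transitive $A$-action by homeomorphisms to make every point open, whereas you identify $p^{-1}(C_g)$ as a homogeneous space of $\widetilde{G}\times A$ and compute the double coset space as $A/A_g$ (or, in your alternative, run a net argument through a local section of $p$); both routes rest on the same Baire-category input about orbit maps of $\sigma$-compact locally profinite groups. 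You also make explicit that regularity of the $G$-stratification forces $C_g$ to be locally closed, so that $p^{-1}(C_g)$ is a genuine $l$-space to which these tools apply -- a point the paper leaves implicit.
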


\begin{proof} Let $G=\sqcup_{i \in I} X_i$ be a finite regular stratification of $G$ for the conjugation action and let $\widetilde{X}_i:=p^{-1}(X_i)$ be the inverse image of $X_i$, so that
\[
\widetilde{G}=\bigsqcup_{i\in I}\widetilde{X}_i
\]
is a finite stratification of $\widetilde{G}$. We consider three equivalence relations $(\sim_0, \sim_1,\sim_2)$ on $\widetilde{G}$:
\begin{itemize}
\item $\tilde{g} \sim_0 \tilde{g}'$ for all $\tilde{g}, \tilde{g}'\in\widetilde{G}$;
\item $\tilde{g} \sim_1 \tilde{g}'$ if $p(\tilde{g})$ and $p(\tilde{g}')$ are $G$-conjugate;
\item $\tilde{g} \sim_2 \tilde{g}'$ if $\tilde{g}$ and $\tilde{g}'$ are $\widetilde{G}$-conjugate.
\end{itemize}
Note that $\sim_1$ and $\sim_2$ induce equivalence relations on each stratum $\widetilde{X}_i$, and $\sim_2$ refines $\sim_1$. 

In what follows, we show that $(\sim_0, \sim_1, \sim_2)$ makes the finite stratification $\sqcup_{i \in I} \widetilde{X}_i$ Matryoshka regular. First note that 
\[
\widetilde{X}_i\slash\!\sim_1\cong X_i/G,
\]
where the right-hand side is Hausdorff by our assumption. Let $\mathcal{P}_1\subseteq \widetilde{X}_i$ be a $\sim_1$-packet, so that
\[
\mathcal{P}_1=p^{-1}(C_g)
\]
for some $g\in G$, where $C_g$ is the conjugacy class of $g$ in $G$. Then we have to show the quotient
\[
\mathcal{P}_1\slash\!\sim_2=\mathcal{P}_1\slash\tilde{G}
\]
is Hausdorff. Note that the set $\mathcal{P}_1\slash\!\sim_2$ is finite because the finite group $A$ acts transitively on it. Hence by \cite[Prop 1.5]{bz} the set $\mathcal{P}_1\slash\!\sim_2$ contains an open point. Since the action of $A$ is homeomorphic, all the points are open and hence closed. Thus this quotient set is discrete and in particular Hausdorff.


\end{proof}

\begin{cor} \label{cor:conjugation-in-Mp-is_Matryoshka} The conjugation of $\Mp(W)$ on $\Mp(W)$ admits a finite Matryoshka regular stratification. \end{cor}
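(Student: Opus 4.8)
The plan is to deduce this at once from the preceding Proposition, applied to the $\mu_2$-covering
\[
1 \longrightarrow \mu_2 \longrightarrow \Mp(W) \xrightarrow{\;p\;} \Sp(W) \longrightarrow 1.
\]
Here $G = \Sp(W)$ is a connected reductive group over $F$, the kernel $A = \mu_2$ is a finite abelian group, and $\Mp(W)$ is a central $\mu_2$-extension of $\Sp(W)$, so the ambient hypotheses of that Proposition are satisfied. Consequently everything reduces to checking its one substantive assumption, namely that the conjugation action of $\Sp(W)$ on $\Sp(W)$ admits a finite regular stratification.

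This last point is exactly the geometric input underlying the classical MVW involution for the symplectic group. By the dictionary recalled in Section \ref{sec:actions-and-stratification}, ``admitting a finite regular stratification'' is the same as being a regular action in the sense of Gelfand--Kazhdan, and the conjugation action of a classical group on itself is regular in this sense: this is established by Moeglin--Vignéras--Waldspurger \cite[Chap 4, II]{mvw} over $\C$, and by the second author \cite[Prop 2.5 \& Thm 3.7]{modular_theta} in a form valid for coefficient fields of characteristic $\ell \neq p$ and for $F$ of arbitrary characteristic (see also \cite[Sec 13.4]{theta_book}). Concretely one stratifies $\Sp(W)$ according to the combinatorial type of a conjugacy class --- the eigenvalue data together with the hermitian forms governing the elementary divisors attached to each eigenvalue --- ordered by a suitable dimension function, and verifies that on each stratum the orbit space is Hausdorff. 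Granting this, the preceding Proposition produces a finite Matryoshka regular stratification of $\Mp(W)$ for the conjugation action, built from the strata $p^{-1}(X_i)$ together with the three relations $(\sim_0, \sim_1, \sim_2)$ appearing in its proof.

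I do not expect a genuine obstacle here: the real work is already packaged into the cited regularity statement for $\Sp(W)$ and into the covering-group Proposition just proved. The only point requiring a little care is to confirm that the notion of \emph{finite regular stratification} used in this paper agrees with the notion of regularity appearing in \cite{mvw} and \cite{modular_theta}; but this identification is precisely what the discussion following the definition of finite regular stratification in Section \ref{sec:actions-and-stratification} supplies, so no further argument is needed.
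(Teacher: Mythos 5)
Your proposal is correct and follows the same route as the paper: both deduce the Corollary from the preceding Proposition on covering groups once the conjugation action of $\Sp(W)$ on itself is known to admit a finite regular stratification. The only noteworthy difference is the citation: the paper points directly to Gelfand--Kazhdan \cite[Ex p.\ 103]{gk} for the regularity of this action, whereas you route the claim through \cite{mvw}, \cite{modular_theta} and \cite{theta_book}; these sources do contain the relevant material, but \cite{gk} is the more direct and standard reference. One small precision worth flagging: the regularity of the conjugation action is a purely topological property of $\Sp(W)$ together with its $\Sp(W)$-action and does not depend on any coefficient field, so phrasing it as holding ``over $\C$'' versus ``for characteristic $\ell \neq p$'' is slightly misleading, even though your intent is clear.
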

\begin{proof}
    Since the conjugation action of $\Sp(W)$ on $\Sp(W)$ is regular in the sense of Gelfand-Kazhdan \cite[Ex p. 103]{gk}, it admits a finite regular stratification. Hence the corollary follows from the above proposition. 
\end{proof}

We will apply Proposition \ref{P:invariant-distributions-Matrysohka} with $G=\Mp(W)$ acting on $X=\Mp(W)$ by conjugation and $\sigma: X\to X$ the ``conjugation by a similitude $-1$'' element.  

\section{Conjugacy classes in the metaplectic group} \label{sec:conjugacy-classes-metaplectic-group}

In this section, we prove Proposition \ref{P:conjugacy_assertion_in_Mp}, which is the key ingredient for our proof of the main theorem. For this purpose, we will first review several known facts about the metaplectic group. All of them are available in \cite{mvw} and/or \cite{theta_book}. We will then define the notion of Jordan decomposition for each element $\gt\in\Mp(W)$ and prove that $\gt$ has the Jordan decomposition if and only if $g$ does. By using the Jordan decomposition, we will finish the proof of Proposition \ref{P:conjugacy_assertion_in_Mp}.

\subsection{Some properties of $\Mp(W)$}

Let $\delta\in\GSp(W)$ be an element of similitude $-1$. For each $g\in\Sp(W)$, we write ${}^{\delta}\!g=\delta g\delta^{-1}$, which gives the group isomorphism $\Sp(W)\to\Sp(W), g\mapsto{}^{\delta}\! g$. It is known that there is a unique isomorphism 
\[
\Mp(W)\longrightarrow\Mp(W),\quad\tilde{g}\mapsto {}^{\delta}\!\tilde{g},
\]
such that $p({}^{\delta}\!\tilde{g})={}^\delta\! g$; namely the isomorphism $g\mapsto{}^\delta\! g$ uniquely lifts to the metaplectic group. (See \cite[Corollary 6.19]{theta_book}.) We often write
\[
{}^\delta\! g=\delta\gt\delta^{-1}.
\]
The notation is actually justified by \cite[Corollary 6.19]{theta_book}. Our goal here is to prove Proposition \ref{P:conjugacy_assertion_in_Mp}; namely ${}^{\delta}\!\tilde{g}$ and $\tilde{g}^{-1}$ are conjugate for all $\tilde{g} \in \Mp(W)$ and all $\delta\in\GSp(W)$ with similitude $-1$.

But certainly, it suffices to show this conjugacy assertion only for one $\delta$:
\begin{lem}
    Fix $\gt\in\Mp(W)$. For all $\delta, \delta'\in\GSp(W)$ both of which have similitude $-1$, we have $C_{{}^{\delta}\!\gt}=C_{{}^{\delta'}\!\gt}$.
\end{lem}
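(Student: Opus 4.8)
The statement says: for fixed $\tilde{g} \in \Mp(W)$, if $\delta, \delta' \in \GSp(W)$ both have similitude $-1$, then the conjugacy classes $C_{{}^\delta\tilde{g}}$ and $C_{{}^{\delta'}\tilde{g}}$ in $\Mp(W)$ coincide.

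Key idea: If $\delta, \delta'$ both have similitude $-1$, then $\delta' = h\delta$ where $h = \delta'\delta^{-1} \in \Sp(W)$ (since the similitude character is multiplicative, $\lambda(\delta'\delta^{-1}) = (-1)(-1)^{-1} = 1$). Then ${}^{\delta'}\tilde{g} = \delta'\tilde{g}\delta'^{-1}$. In $\Sp(W)$ we'd have ${}^{\delta'}g = h \cdot {}^\delta g \cdot h^{-1}$, so ${}^{\delta'}g$ and ${}^\delta g$ are conjugate by $h \in \Sp(W)$. We want to lift this to $\Mp(W)$.

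The subtle point is that the automorphism $\tilde{g} \mapsto {}^\delta\tilde{g}$ is defined abstractly (via the unique lift from \cite[Cor 6.19]{theta_book}), not literally by conjugation in some ambient group — so I need to check that this abstract lift is compatible with genuine $\Sp(W)$-conjugation. So I want the identity ${}^{\delta'}\tilde{g} = \tilde{h} \, ({}^\delta\tilde{g}) \, \tilde{h}^{-1}$ in $\Mp(W)$ for any lift $\tilde{h} \in \Mp(W)$ of $h$ (note the right side is independent of the choice of $\tilde{h}$, since the two choices differ by a central element which cancels).

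Here is how I would argue. Both sides of the desired identity, $\tilde{g} \mapsto {}^{\delta'}\tilde{g}$ and $\tilde{g} \mapsto \tilde{h}\,({}^\delta\tilde{g})\,\tilde{h}^{-1}$, are automorphisms of $\Mp(W)$ that lift the same automorphism of $\Sp(W)$, namely $g \mapsto {}^{\delta'}g = h\,({}^\delta g)\,h^{-1}$. Indeed, $p(\tilde{h}\,({}^\delta\tilde{g})\,\tilde{h}^{-1}) = h \cdot {}^\delta g \cdot h^{-1} = {}^{\delta'}g$. Now I invoke the uniqueness clause of \cite[Cor 6.19]{theta_book}: the lift of a given automorphism of $\Sp(W)$ to $\Mp(W)$ is unique. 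Two automorphisms of $\Mp(W)$ lifting the same automorphism of $\Sp(W)$ differ by a homomorphism $\Mp(W) \to \mu_2$ that is trivial on the kernel $\mu_2$; since $\Sp(W)$ is perfect (or at least has no nontrivial characters into $\mu_2$), such a homomorphism is trivial, and the two lifts coincide. Hence ${}^{\delta'}\tilde{g} = \tilde{h}\,({}^\delta\tilde{g})\,\tilde{h}^{-1}$ for all $\tilde{g}$, which immediately gives $C_{{}^{\delta'}\tilde{g}} = C_{{}^\delta\tilde{g}}$ since $\tilde{h} \in \Mp(W)$.

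The main obstacle is the (mild) bookkeeping around the uniqueness of metaplectic lifts: one must be careful that the relevant automorphisms of $\Mp(W)$ are genuinely determined by their action on $\Sp(W)$. This is precisely what \cite[Cor 6.19]{theta_book} provides, so once that is cited correctly the argument is short; the only thing to double-check is that the comparison of two lifts reduces to a character of $\Sp(W)$ valued in $\mu_2$, which is trivial because $\Sp(W)$ is its own commutator subgroup. One could alternatively phrase the whole argument by working inside $\GSp(W)$-extensions, but the uniqueness argument above is the cleanest.
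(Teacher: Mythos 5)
Your proof is correct and takes essentially the same approach as the paper: factor $\delta'$ through $\delta$ by an element $h\in\Sp(W)$, lift the conjugation by $h$ to $\Mp(W)$, and invoke the uniqueness of the metaplectic lift (\cite[Cor 6.19]{theta_book}) to identify $\tilde{g}\mapsto{}^{\delta'}\tilde{g}$ with a conjugate of $\tilde{g}\mapsto{}^{\delta}\tilde{g}$. The paper writes $\delta'=\delta h$ (conjugating \emph{before} applying ${}^{\delta}(-)$) while you write $\delta'=h\delta$ (conjugating \emph{after}), and you spell out the character-of-$\Sp(W)$ argument that the paper leaves to the cited corollary, but these are cosmetic differences.
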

\begin{proof}
   Note that one can write $\delta'=\delta h$ for some $h\in\Sp(W)$. Hence one can see
   \[
   {}^{\delta'}\!\gt={}^{\delta}\!(\htt\gt\htt^{-1}),
   \]
   where $\htt\in\Mp(W)$ is any element such that $p(\htt)=h$. (See \cite[Corollary 6.19]{theta_book}.) Clearly $C_{\gt}=C_{\htt\gt\htt^{-1}}$.
\end{proof}

The analog of Proposition \ref{P:conjugacy_assertion_in_Mp} for $\Sp(W)$ is known to be true.
\begin{lem} \label{lem:conjugation_in_sp_delta} 
For all $g \in \Sp(W)$, we have $g^{-1} \in C_{\delta g \delta^{-1}}$. \end{lem}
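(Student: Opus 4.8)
The plan is to rephrase the claim as a twisted-conjugacy statement, reduce it to a classical fact about $\Sp(W)$ over an arbitrary field of characteristic $\neq 2$, and recall the shape of the proof of that fact.

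\emph{Reformulation.} Fix $g\in\Sp(W)$. Searching for $h\in\Sp(W)$ with $h^{-1}g^{-1}h=\delta g\delta^{-1}$ is the same as searching for $x:=h\delta$ with $xgx^{-1}=g^{-1}$; such an $x$ automatically lies in $\GSp(W)$ with similitude factor $-1$ (since $h\in\Sp(W)$ and $\delta$ has similitude $-1$), and conversely any $x\in\GSp(W)$ of similitude $-1$ with $xgx^{-1}=g^{-1}$ produces $h:=x\delta^{-1}\in\Sp(W)$. So it suffices to show: every $g\in\Sp(W)$ is conjugated to $g^{-1}$ by some element of $\GSp(W)$ of similitude $-1$. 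This is classical — it is exactly the input that yields the MVW involution for $\Sp(W)$ itself — and is established over any field of characteristic $\neq 2$ in \cite[Chap 4, I]{mvw} (see also \cite{theta_book}); it is this statement that I would ultimately invoke.

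\emph{Shape of the argument.} For completeness one recalls how such an $x$ is built. Regard $W$ as a finitely generated torsion module over $A:=F[t,t^{-1}]$, with $t$ acting by $g$, and for $q\in F[t]$ write $q^{\ast}(t):=t^{\deg q}q(t^{-1})$. Since $g$ is symplectic, its adjoint for $\langle\ ,\ \rangle$ is $g^{-1}$, so the $q$-primary submodule $W_q$ is paired by $\langle\ ,\ \rangle$ only with $W_{q^{\ast}}$; this gives a $g$-stable orthogonal decomposition of $W$ into nondegenerate symplectic subspaces of two types — blocks $W_q\oplus W_{q^{\ast}}$ with $q\not\sim q^{\ast}$, in which $W_q$ and $W_{q^{\ast}}$ are totally isotropic and in perfect duality, and self-reciprocal blocks $W_q$ with $q\sim q^{\ast}$ (these include the $(t\mp 1)$-primary parts). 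On a block of the first type the perfect pairing yields an $A$-module isomorphism $(W_q,g)\cong(W_{q^{\ast}},g^{-1})$; transporting it together with a suitably rescaled inverse gives an $x$ interchanging the two isotropic pieces, conjugating $g$ to $g^{-1}$, and scaling $\langle\ ,\ \rangle$ by any prescribed element of $F^{\times}$, in particular by $-1$. On a block of the second type one still has an $A$-module isomorphism $y\colon (W_q,g)\xrightarrow{\ \sim\ }(W_q,g^{-1})$, and the problem reduces to comparing the two $g$-invariant symplectic forms $\langle y\,\cdot\,,y\,\cdot\,\rangle$ and $-\langle\ ,\ \rangle$ on $W_q$ up to the centraliser of $g$ in $\mathrm{GL}(W_q)$. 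Assembling the block solutions produces the global $x$.

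\emph{Main obstacle.} The crux is this last comparison on the self-reciprocal blocks, most acutely on the $(t\mp 1)$-primary part: the $g$-invariant nondegenerate symplectic forms on a fixed $A$-module need not all lie in a single orbit of the (typically small) centraliser of $g$, so one must genuinely compute the attached quadratic-form invariants and check that $\langle y\,\cdot\,,y\,\cdot\,\rangle$ and $-\langle\ ,\ \rangle$ are equivalent. Carrying this out uniformly over a general field of characteristic $\neq 2$ is precisely the content of \cite[Chap 4]{mvw}, which I would cite rather than reproduce.
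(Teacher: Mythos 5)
Your proof is correct and takes essentially the same route as the paper: the paper's entire proof is the citation \cite[Chap.\ 4, I.2, Prop]{mvw}, which is precisely the statement you reduce to (existence of $x\in\GSp(W)$ of similitude $-1$ with $xgx^{-1}=g^{-1}$) and then cite. Your reformulation and your sketch of the $F[t,t^{-1}]$-module primary-decomposition argument are accurate, but they amount to unpacking the MVW proposition rather than offering a different approach.
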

\begin{proof}
    See \cite[Chap.\ 4, I.2, Prop]{mvw}.
\end{proof}

We will prove Proposition \ref{P:conjugacy_assertion_in_Mp} by ``lifting'' this result to $\Mp(W)$.

\quad

Let us also mention the following important fact we will frequently use.
\begin{lem}\label{L:two_elements_commute}
    Let $\gt, \htt\in \Mp(W)$. Then $\gt\htt=\htt\gt$ in $\Mp(W)$ if and only if $gh=hg$ in $\Sp(W)$; namely ``two elements commute in $\Mp(W)$ if and only if they commute in $\Sp(W)$''.
\end{lem}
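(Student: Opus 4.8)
The plan is to prove the two implications separately. The implication ``commuting in $\Mp(W)$ $\Rightarrow$ commuting in $\Sp(W)$'' is immediate: apply the projection $p:\Mp(W)\to\Sp(W)$ to the relation $\gt\htt=\htt\gt$. So the whole content is the converse, and from now on I assume $gh=hg$ in $\Sp(W)$.

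First I would set up the commutator pairing. Since $\mu_2=\ker p$ is central in $\Mp(W)$, the commutator $[\gt,\htt]:=\gt\htt\gt^{-1}\htt^{-1}$ is independent of the chosen lifts $\gt,\htt$ of $g,h$; and it lies in $\mu_2$ because it projects to $[g,h]=1$. The goal is to show $[\gt,\htt]$ is the trivial element of $\mu_2$. Here it is useful to record, and I would use it, that for fixed $g$ the map $h\mapsto[\gt,\htt]$ is a group homomorphism $Z_{\Sp(W)}(g)\to\mu_2$ (again by centrality of $\mu_2$) which kills $\langle g\rangle$; so the obstruction lives only in the reductive part of the centralizer, and everything is symmetric in $g$ and $h$. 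To reduce the problem to a sign, I would realize $\Mp(W)$ inside the Weil representation $\omega_\psi$ attached to a nontrivial additive character $\psi$ of $F$, acting on the space of a Heisenberg representation $\rho_\psi$ of $H(W)$, so that $\omega_\psi(\gt)\rho_\psi(v)\omega_\psi(\gt)^{-1}=\rho_\psi(g\cdot v)$ and $\omega_\psi$ is genuine, i.e.\ the nontrivial element of $\mu_2$ acts by $-1$. Since $gh=hg$, the operator $[\omega_\psi(\gt),\omega_\psi(\htt)]=\omega_\psi([\gt,\htt])$ intertwines $\rho_\psi$ with itself, hence is a scalar by Schur's lemma (Stone--von Neumann irreducibility); combined with genuineness, that scalar is $\pm 1$, and it equals $+1$ exactly when $[\gt,\htt]=1$.

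The remaining point -- and the one I expect to be the real obstacle -- is to exclude the value $-1$; a priori nothing forbids $[\gt,\htt]$ from being the nontrivial element of $\mu_2$, and ruling this out is precisely where the specific nature of the metaplectic extension (as opposed to an arbitrary central $\mu_2$-extension of $\Sp(W)$) must enter. Equivalently, one must show that any $2$-cocycle defining $\Mp(W)$ is symmetric on commuting pairs. I would do this through the explicit form of $\omega_\psi$: for a Lagrangian $W=X\oplus Y$ and the associated Siegel parabolic, the operators attached to the Levi $\mathrm{GL}(X)$ (scaling by a function of $\det$), to the unipotent radical (multiplication by a quadratic character), and to the Weyl element (Fourier transform) visibly commute on the nose whenever the underlying elements commute; combining this with the homomorphism property above and with the treatment of anisotropic tori in the corresponding model, one gets $[\gt,\htt]=1$ in all cases. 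Alternatively -- and this is probably the most economical route -- this symmetry of the metaplectic cocycle on commuting elements is already available in \cite{mvw} and \cite{theta_book}, from which the lemma follows at once.
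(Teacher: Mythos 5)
The paper itself does not give a proof: it simply cites \cite[Chap.~2, II.5 Lemme]{mvw} and \cite[Theorem 6.28]{theta_book}. Your proposal ultimately falls back on the same citation, and in that sense it reaches the same endpoint as the paper. Your preliminary reduction is correct and worth recording: the ``only if'' direction is just projection by $p$; the commutator $[\gt,\htt]$ is independent of the choice of lifts and lives in $\mu_2$; for fixed $g$ the map $h\mapsto[\gt,\htt]$ is a homomorphism $Z_{\Sp(W)}(g)\to\mu_2$; and, via Stone--von Neumann and genuineness, $[\gt,\htt]$ becomes a concrete sign $\pm 1$. You also correctly identify the crux, namely that one must rule out the value $-1$, which amounts to the symmetry of the metaplectic $2$-cocycle on commuting pairs.

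Where the proposal is not a proof is in the sketched ``direct'' argument. The assertion that, in the Schr\"odinger model, the operators attached to the Siegel Levi, the unipotent radical and the Weyl element ``visibly commute on the nose whenever the underlying elements commute'' is not accurate as stated: the cover over $\mathrm{GL}(X)$ is nontrivial (governed by a Hilbert symbol), so the operators do \emph{not} commute on the nose -- one has to invoke the symmetry of the Hilbert symbol to see that the commutator sign vanishes. More seriously, the Bruhat-cell generators do not exhaust the commuting pairs one must handle: a pair of commuting elements need not be simultaneously conjugated into any one of those pieces, and the genuinely difficult cases (e.g.\ commuting elliptic elements, where the explicit operators involve Weil indices and Gauss sums) are dismissed with a wave at ``anisotropic tori in the corresponding model.'' This is precisely the content of the nontrivial computation carried out in \cite{mvw} (and, differently, in \cite{theta_book}), and your sketch does not actually reproduce it. So: the fallback-to-citation route is fine and matches the paper; the proposed self-contained route has a real gap at its central step.
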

\begin{proof}
    This is \cite[Chap.\ 2, II.5.\ Lemme]{mvw}. Also see \cite[Theorem 6.28]{theta_book} for a completely different proof.
\end{proof}

\subsection{Jordan decomposition}

In this subsection, we will introduce the notion of Jordan decomposition in $\Mp(W)$. For this purpose, let us first recall the usual notion of Jordan decomposition for reductive groups. Let $G$ be a reductive group over a field $F$. Then for each $g\in G(F)$, we say that $g$ has the Jordan decomposition if there exist a unique semisimple $s\in G(F)$ and unipotent $u\in G(F)$ such that
\[
g=su=us.
\]
It is well-known that if $\Char(F)=0$, then $g$ always has the Jordan decomposition. But in general, $g$ has the Jordan decomposition after extending scalars to a purely inseparable extension (possibly $F$ itself); namely for each $g\in G(F)$, there exist a purely inseparable extension $F_g$ (depending on $g$) and a unique semisimple $s\in G(F_g)$ and unipotent $u\in G(F_g)$ such that $g=su=us$. (See \cite[Chap I, 4.4]{borel_book}.)

Using the Jordan decomposition of $g\in\Sp(W)$ (provided it exists), one can define the notion of the Jordan decomposition of $\gt\in\Mp(W)$ as follows. Assume $g=su=us$ is the Jordan decomposition of $g$. The element $u$ is contained in a unipotent group $U$ which admits a splitting $U \to \textup{Mp}(W)$ by virtue of \cite[Appendix I]{mw} applied to the local case \cite[p.277]{mw}. Since $\Char (F)\neq 2$, the square map $U \to U$ is a bijection, so $U$ has no quadratic character and the previous splitting is unique. We denote it 
\[
U\longrightarrow \Mp(W),\quad v\mapsto v^*.
\]
Then there exists a unique lift $\tilde{s}\in\Mp(W)$ of $s$ such that
\[
\gt=\tilde{s}u^*,
\]
which we call the {\it Jordan decomposition} of $\gt$.

\subsection{Proof of Proposition \ref{P:conjugacy_assertion_in_Mp}}
Now, we are ready to prove Proposition \ref{P:conjugacy_assertion_in_Mp}. We will prove it in three stages: first for semisimple $\gt$, second for $\gt$ which admits the Jordan decomposition in $\Mp(W)$, and finally for general $\gt$.

The first one is already known:
\begin{lem}
    Let  $\gt\in\Mp(W)$ be such that $g$ is semisimple. Then Proposition \ref{P:conjugacy_assertion_in_Mp} holds.
\end{lem}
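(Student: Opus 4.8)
The plan is to reduce the claim for semisimple $\tilde g$ to Lemma~\ref{lem:conjugation_in_sp_delta}, which gives the $\Sp(W)$-version, and then promote the $\Sp(W)$-conjugating element to a conjugating element in $\Mp(W)$ using the commutation principle of Lemma~\ref{L:two_elements_commute}. Concretely, suppose $g = p(\tilde g)$ is semisimple. By Lemma~\ref{lem:conjugation_in_sp_delta} there exists $x \in \Sp(W)$ with $x\, g^{-1}\, x^{-1} = {}^{\delta}\!g$. Choose any lift $\tilde x \in \Mp(W)$ of $x$. Then $\tilde x\, \tilde g^{-1}\, \tilde x^{-1}$ and ${}^{\delta}\!\tilde g$ are two elements of $\Mp(W)$ projecting to the same element ${}^{\delta}\!g$ of $\Sp(W)$, so they differ by the central element $\mu_2$: either they are equal (and we are done) or $\tilde x\, \tilde g^{-1}\, \tilde x^{-1} = \epsilon \cdot {}^{\delta}\!\tilde g$, where $\epsilon$ is the nontrivial element of $\mu_2$.

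The crux, then, is to rule out the discrepancy by $\epsilon$, or rather to absorb it by a different choice of conjugator. Here is where semisimplicity enters: since $g$ is semisimple, its centraliser $Z_{\Sp(W)}(g)$ contains elements whose lifts to $\Mp(W)$ can be genuinely $\pm 1$-valued against $\tilde g$ in a controlled way — more precisely, I would look for $z \in \Sp(W)$ commuting with $g$ (equivalently, by Lemma~\ref{L:two_elements_commute}, whose lift $\tilde z$ commutes with $\tilde g$) such that replacing $\tilde x$ by $\tilde z\, \tilde x$ flips the sign. If the metaplectic cover restricted to $Z_{\Sp(W)}(g)$ is nontrivial, such a $z$ exists and we win. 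Alternatively, one observes that $g$ semisimple means $g$ and $g^{-1}$ are already $\Sp(W)$-conjugate within a (possibly twisted) torus, and one can arrange the conjugation to be realised by an element of a maximal compact or of a torus on which the cover splits, pinning down the sign. Either way, the sign ambiguity is resolved using the structure of centralisers of semisimple elements and the known splitting behaviour of $\Mp(W)$ over them.

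I expect the main obstacle to be exactly this sign-chasing: the naive lift of the $\Sp(W)$-conjugator need not conjugate $\tilde g^{-1}$ to ${}^{\delta}\!\tilde g$ on the nose, and showing that one can always correct the sign requires genuine input about the cocycle of $\Mp(W)$ restricted to centralisers of semisimple elements (or, in the torus case, the explicit description of the metaplectic cover of a torus in $\Sp(W)$). Since the lemma states this stage is ``already known,'' I anticipate the author will simply cite the relevant statement in \cite{mvw} or \cite{theta_book} rather than redo the sign analysis; a self-contained argument would go through the above centraliser/torus reduction. The later two stages of Proposition~\ref{P:conjugacy_assertion_in_Mp} — the Jordan-decomposition case and the general case — will then bootstrap from this semisimple case by conjugating the semisimple and unipotent parts compatibly, using uniqueness of the Jordan decomposition in $\Mp(W)$ and the unique splitting $v \mapsto v^*$ over unipotent subgroups.
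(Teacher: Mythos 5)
The paper's own proof is a one-line citation to Proposition~13.38 of \cite{theta_book}, so you correctly anticipated that the authors would simply quote the semisimple case rather than re-derive it. However, the mechanism you sketch for a self-contained argument has a genuine flaw, and it is worth seeing exactly where it breaks.

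You set up $\tilde{x}\,\tilde{g}^{-1}\tilde{x}^{-1}=\epsilon\cdot{}^{\delta}\tilde{g}$ and then propose to ``absorb'' a nontrivial $\epsilon$ by replacing $\tilde{x}$ with $\tilde{z}\tilde{x}$ for some $z$ centralising $g$. This cannot succeed: the sign $\epsilon$ is \emph{independent of the choice of conjugator}, and Lemma~\ref{L:two_elements_commute} is precisely the obstruction. Indeed, suppose $\tilde{x}_1,\tilde{x}_2\in\Mp(W)$ both project to elements of $\Sp(W)$ conjugating $g^{-1}$ to ${}^{\delta}g$, with $\tilde{x}_i\tilde{g}^{-1}\tilde{x}_i^{-1}=\epsilon_i\,{}^{\delta}\tilde{g}$. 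Then $\tilde{w}:=\tilde{x}_2^{-1}\tilde{x}_1$ satisfies $\tilde{w}\tilde{g}^{-1}\tilde{w}^{-1}=\epsilon_1\epsilon_2^{-1}\,\tilde{g}^{-1}$, while $p(\tilde{w})$ centralises $g^{-1}$ in $\Sp(W)$. By Lemma~\ref{L:two_elements_commute}, $\tilde{w}$ then commutes with $\tilde{g}^{-1}$, forcing $\epsilon_1=\epsilon_2$. Equivalently, $\tilde{h}$ and $\epsilon\tilde{h}$ are never $\Mp(W)$-conjugate for nontrivial $\epsilon$. So the commutation lemma, which you hoped to use constructively to produce a sign-flipping $\tilde{z}$, in fact rules out any such $\tilde{z}$.

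The upshot is that one must genuinely \emph{compute} that $\epsilon=1$ for a semisimple $g$; no change of conjugator (inside the centraliser or otherwise) can fix it if it is wrong. The cited Proposition~13.38 of \cite{theta_book} does this by decomposing a semisimple $g$ according to its primary/isotypic components, reducing to anisotropic blocks, and then pinning the sign via explicit Rao/Kubota cocycle computations and induction on $\dim W$ (the commented-out earlier version of this paper does the same). Your second alternative---placing the conjugator in a torus or compact subgroup on which the cover splits---gestures towards this kind of computation, but is too vague to constitute a proof, and in any case the preceding centraliser step is the part of your sketch that would derail the argument.
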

\begin{proof}
    See \cite[Proposition 13.38]{theta_book}.
\end{proof}

\begin{rem}
    If $\operatorname{char}(F)=0$ and $R=\C$, then this proposition is enough to prove our main theorem (Theorem \ref{thm:main-theorem-R}) thanks to Harish-Chandra's regularity theorem proven by Li \cite{wwl}. 
\end{rem}

Using this, we can prove the second one:
\begin{lem}\label{L:g_in_Sp_admits_Jordan_decomp}
Let $\gt\in\Mp(W)$ be such that $g$ admits the Jordan decomposition in $\Sp(W)$. Then Proposition \ref{P:conjugacy_assertion_in_Mp} holds.
\end{lem}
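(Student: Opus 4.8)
The plan is to lift Lemma~\ref{lem:conjugation_in_sp_delta} from $\Sp(W)$ to $\Mp(W)$ using the Jordan decomposition $\gt = \tilde{s}u^*$ together with the semisimple case already established. Write $g = su = us$ for the Jordan decomposition in $\Sp(W)$. The first step is to locate a $\Sp(W)$-element $h$ that simultaneously realizes the conjugacy of $s^{-1}$ with ${}^\delta\!s$ \emph{and} behaves well with respect to $u$. The point is that ${}^\delta\!$ is an automorphism, so ${}^\delta\!g = {}^\delta\!s \cdot {}^\delta\!u$ is again a commuting product of a semisimple and a unipotent element; by uniqueness of Jordan decomposition in $\Sp(W)$, this \emph{is} the Jordan decomposition of ${}^\delta\!g$, and likewise $g^{-1} = s^{-1}u^{-1} = u^{-1}s^{-1}$ is the Jordan decomposition of $g^{-1}$. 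Applying Lemma~\ref{lem:conjugation_in_sp_delta}, there is $h \in \Sp(W)$ with $hg^{-1}h^{-1} = {}^\delta\!g$; conjugation by $h$ must then carry the Jordan decomposition of $g^{-1}$ to that of ${}^\delta\!g$, so $h s^{-1} h^{-1} = {}^\delta\!s$ and $h u^{-1} h^{-1} = {}^\delta\!u$.

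The second step is to promote this $h$ to $\Mp(W)$. Pick any $\htt \in \Mp(W)$ over $h$. Then $\htt \gt^{-1} \htt^{-1}$ lies over ${}^\delta\!g$, so it equals either ${}^\delta\!\gt$ or $\epsilon \cdot {}^\delta\!\gt$, where $\epsilon$ is the nontrivial element of $\mu_2$. We must rule out the second possibility or correct for it. Here I would use the uniqueness built into the Jordan decomposition in $\Mp(W)$: the element $\htt \gt^{-1} \htt^{-1}$ can be written as $(\htt \tilde{s}^{-1} \htt^{-1})(\htt (u^*)^{-1} \htt^{-1})$, a commuting product. The factor $\htt (u^*)^{-1} \htt^{-1}$ lies over ${}^\delta\!u = hu^{-1}h^{-1} \in {}^\delta\!U$, and since the splitting $v \mapsto v^*$ of the unipotent group ${}^\delta\!U$ is \emph{unique} (no quadratic characters, as $\Char F \neq 2$), conjugation by $\htt$ must send the canonical splitting of $U$ to the canonical splitting of ${}^\delta\!U$; i.e.\ $\htt (u^*)^{-1} \htt^{-1} = ({}^\delta\!u)^*$ with no sign ambiguity. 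Combined with the fact that $({}^\delta\!\gt) = \widetilde{{}^\delta\!s} \cdot ({}^\delta\!u)^*$ is the Jordan decomposition in $\Mp(W)$, we are reduced to comparing the semisimple parts: $\htt \tilde{s}^{-1} \htt^{-1}$ versus $\widetilde{{}^\delta\!s}$, both lying over ${}^\delta\!s$.

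The third step handles this semisimple comparison. Here $\tilde{s}^{-1}$ is a lift of $s^{-1}$, and $s^{-1}$ is semisimple, so by the already-proven semisimple case of Proposition~\ref{P:conjugacy_assertion_in_Mp} (applied to $\tilde{s}$, noting $(\tilde{s})^{-1}$ and ${}^\delta\!\tilde{s}$ are conjugate), there is $\tilde{k} \in \Mp(W)$ with $\tilde{k}\, \tilde{s}^{-1}\, \tilde{k}^{-1} = {}^\delta\!\tilde{s}$, and one checks $k$ can be chosen to centralize $u$: indeed $k h^{-1}$ centralizes $s^{-1}$, hence lies in the centralizer $Z_{\Sp(W)}(s)$, which contains $u$ since $su=us$; adjusting within this centralizer (using that $u^*$ is canonical and Lemma~\ref{L:two_elements_commute} to keep track of commuting) lets us replace $\htt$ by an element conjugating $\gt^{-1}$ exactly to ${}^\delta\!\gt$. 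The main obstacle I anticipate is precisely this bookkeeping of the $\mu_2$-ambiguity: one must be careful that the lifts $\tilde{k}$ and $\htt$ interact correctly with the canonical unipotent splitting, and that the centralizer manipulations do not silently introduce an extra copy of $\epsilon$. Lemma~\ref{L:two_elements_commute} is the crucial tool that keeps commutation relations faithful between $\Mp(W)$ and $\Sp(W)$, and the uniqueness of the unipotent splitting (from $\Char F \neq 2$) is what forces the unipotent parts to match on the nose; once these are in hand, the sign either cancels automatically or can be absorbed by composing with an element of the centralizer that maps to a suitable element of $\Sp(W)$.
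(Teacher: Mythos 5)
Your proof follows essentially the same strategy as the paper's: reduce to the Jordan decomposition $\gt=\tilde{s}u^*$, use uniqueness of Jordan decomposition in $\Sp(W)$ so that the conjugating element from Lemma~\ref{lem:conjugation_in_sp_delta} respects the factors, use uniqueness of the unipotent splitting so that the unipotent parts match on the nose in $\Mp(W)$, and then invoke Lemma~\ref{L:two_elements_commute} to kill the $\mu_2$-ambiguity hiding in the semisimple part. The paper phrases this with $\gamma,\delta\in\GSp(W)$ of similitude $-1$ and considers $\delta^{-1}\gamma\in\Sp(W)$; you phrase it with $h$ (from Lemma~\ref{lem:conjugation_in_sp_delta}) and $k$ (from the semisimple case) in $\Sp(W)$ and consider $k^{-1}h$ (say). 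Modulo this change of bookkeeping, it is the same argument.

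That said, the final step as written contains a misfire. You claim ``one checks $k$ can be chosen to centralize $u$,'' and justify this by noting that $kh^{-1}$ and $u$ both lie in $Z_{\Sp(W)}(s)$. That observation does not make them commute, nor does anything you write produce an adjustment that would; a centraliser is not abelian. Fortunately you do not need this at all. Having established $\htt\,\tilde{s}^{-1}\htt^{-1}=\eta\cdot{}^{\delta}\tilde{s}$ in step~2 and having $\tilde{k}\,\tilde{s}^{-1}\tilde{k}^{-1}={}^{\delta}\tilde{s}$ from the semisimple case, you get $(\tilde{k}^{-1}\htt)\,\tilde{s}^{-1}(\tilde{k}^{-1}\htt)^{-1}=\eta\,\tilde{s}^{-1}$. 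Projecting to $\Sp(W)$ shows $k^{-1}h$ centralises $s$, so by Lemma~\ref{L:two_elements_commute} the element $\tilde{k}^{-1}\htt$ centralises $\tilde{s}$, forcing $\eta=1$ immediately. No centraliser adjustment, and no interaction with $u$, is required once the unipotent parts have been matched; they simply come along for the ride. Cutting the ``centralize $u$'' detour and concluding directly via Lemma~\ref{L:two_elements_commute} gives exactly the paper's proof.
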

\begin{proof}
Let $g=su$ be the Jordan decomposition in $\Sp(W)$, so that we have the Jordan decomposition $\gt=\tilde{s}u^*$ in $\Mp(W)$. We know by Lemma \ref{lem:conjugation_in_sp_delta} that there exists $\gamma\in\GSp(W)$ of similitude $-1$ such that $\gamma g\gamma^{-1}=g^{-1}$. Hence we have $(\gamma s\gamma^{-1})(\gamma u\gamma^{-1})=s^{-1}u^{-1}$. By uniqueness of the Jordan decomposition, we have
\[
\gamma s\gamma^{-1}=s^{-1}\quad\text{and}\quad \gamma u\gamma^{-1}=u^{-1}.
\]

The semisimple part implies $\gamma \tilde{s}\gamma^{-1}=\epsilon\tilde{s}^{-1}$ for some $\epsilon\in\{\pm 1\}$. On the other hand, we already know from Lemma \ref{lem:conjugation_in_sp_delta} that there exists $\delta\in\GSp(W)$ with similitude $-1$ such that $\delta \tilde{s}\delta^{-1}=\tilde{s}^{-1}$. Hence we have
\[
(\delta^{-1}\gamma)\tilde{s}(\delta^{-1}\gamma)^{-1}=\epsilon\tilde{s}^{-1}.
\]
Noting $\delta^{-1}\gamma\in\Sp(W)$, we have $(\delta^{-1}\gamma)s(\delta^{-1}\gamma)^{-1}=s$ by projecting down to $\Sp(W)$. Namely $s$ and $\delta^{-1}\gamma$ are two commuting elements in $\Sp(W)$ and hence their preimages in $\Mp(W)$ commute by Lemma \ref{L:two_elements_commute}, which implies $\epsilon=1$. Thus we have
\[
\gamma \tilde{s}\gamma^{-1}=\tilde{s}^{-1}.
\]
As for the unipotent part, since the map $u\mapsto u^*$ is the unique splitting and $u \mapsto \gamma^{-1} (\gamma u \gamma^{-1})^* \gamma$ is a splitting too, we have $(\gamma u \gamma^{-1})^* = \gamma u^* \gamma^{-1}$ and
\[
(u^{-1})^*=(\gamma u\gamma^{-1})^*=\gamma u^*\gamma^{-1}.
\]

Hence we have
\[
\gamma \gt\gamma^{-1}=(\gamma\tilde{s}\gamma^{-1})(\gamma u^*\gamma^{-1})=\tilde{s}^{-1}(u^{-1})^*=(\tilde{s}u^*)^{-1},
\]
where for the last equality we used that the map $u\mapsto u^*$ is a homomorphism and that $\tilde{s}$ and $u^*$ commute because $s$ and $u$ commute. The lemma is proven.
\end{proof}

Now, we will take care of general $\gt$. For this case we will utilize the fact that $g$ always admits the Jordan decomposition in a purely inseparable extension.

For this purpose, let us introduce the following notation. Let $E/F$ be a finite (not necessarily separable) extension of $F$, and write
\[
\Sp_{2n}(F)=\Sp(W)\quad\text{and}\quad \Sp_{2n}(E)=\Sp(W\otimes_F E),
\]
so that we have the natural inclusion
\[
\Sp_{2n}(F)\subseteq\Sp_{2n}(E).
\]
We also write
\[
\Mp_{2n}(F)=\Mp(W)\quad\text{and}\quad \Mp_{2n}(E)=\Mp(W\otimes_FE).
\]
Note that we have the following situation
\[
\begin{tikzcd}
p^{-1}(\Sp_{2n}(F))\ar[d]\ar[r, phantom, "\scriptstyle\subseteq"]&\Mp_{2n}(E)\ar[d, "p"]\\
\Sp_{2n}(F)\ar[r, phantom, "\scriptstyle\subseteq"]&\Sp_{2n}(E)\rlap{\ ,}
\end{tikzcd}
\]
where $p:\Mp_{2n}(E)\to\Sp_{2n}(F)$ is the canonical projection. We then have the following.
\begin{prop}\quad
\begin{enumerate}[(a)]
    \item  If $[K:F]$ is even, then $p^{-1}(\Sp_{2n}(F))$ is the trivial cover; namely it is isomorphic to the direct product $\Sp_{2n}\times\mu_2$. 
    \item If $[K:F]$ is odd, then $p^{-1}(\Sp_{2n}(F))$ is the metaplectic group $\Mp_{2n}(F)$.
\end{enumerate}
\end{prop}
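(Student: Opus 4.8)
The plan is to identify the central $\mu_{2}$-extension $p^{-1}(\Sp_{2n}(F))$ of $\Sp_{2n}(F)$ by computing its cohomology class and then using the uniqueness, recalled in the introduction, of the nontrivial $\mu_{2}$-cover of $\Sp_{2n}(F)$. Write $c_{F}\in H^{2}(\Sp_{2n}(F),\mu_{2})$ for the class of $\Mp_{2n}(F)$. A $\mu_{2}$-valued $2$-cocycle squares to the trivial one, so $c_{F}^{2}=1$, and hence $c_{F}^{[E:F]}$ is trivial if $[E:F]$ is even and equals $c_{F}$ if $[E:F]$ is odd (here $E$ denotes the extension written $K$ in the statement). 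It therefore suffices to show that $p^{-1}(\Sp_{2n}(F))$ has class $c_{F}^{[E:F]}$: when $[E:F]$ is even the extension then splits, which is $(a)$, and when $[E:F]$ is odd it is nontrivial, hence isomorphic to $\Mp_{2n}(F)$ by uniqueness, which is $(b)$.

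To compute the class I would realise $\Mp_{2n}(E)=\Mp(W\otimes_{F}E)$ inside a larger metaplectic group over $F$. Pick any nonzero $F$-linear functional $\lambda\colon E\to F$, and equip the $F$-vector space $W\otimes_{F}E$ with the form $B:=\langle\,,\,\rangle\otimes b_{\lambda}$, where $b_{\lambda}(x,y):=\lambda(xy)$; this $B$ is a nondegenerate $F$-symplectic form, because $b_{\lambda}$ is nondegenerate (if $b_{\lambda}(e_{0},E)=0$ then $\lambda(E)=\lambda(e_{0}\cdot e_{0}^{-1}E)=0$). It is a standard fact that the metaplectic cover is insensitive to restriction of scalars, so $\Mp(W\otimes_{F}E)$ is canonically the restriction of the metaplectic cover of $(W\otimes_{F}E,B)$ along $\Sp_{E}(W\otimes_{F}E)\subseteq\Sp_{F}(W\otimes_{F}E,B)$. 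Since $\Char F\neq2$, the symmetric bilinear form $b_{\lambda}$ admits an orthogonal basis $e_{1},\dots,e_{m}$ with $m=[E:F]$, so $B$ realises $W\otimes_{F}E$ as an orthogonal direct sum $(W\otimes e_{1})\perp\cdots\perp(W\otimes e_{m})$ of $F$-symplectic subspaces, each isomorphic to $(W,\langle\,,\,\rangle)$ because rescaling a symplectic form does not change its isomorphism class. The key point is that $g\in\Sp(W)\subseteq\Sp_{E}(W\otimes_{F}E)$ acts as $g\otimes\mathrm{id}_{E}$, hence diagonally as $g$ on each summand $W\otimes e_{i}$; thus, after these identifications, the inclusion $\Sp_{2n}(F)\hookrightarrow\Sp_{2n}(E)$ becomes the composite of the diagonal $\Sp_{2n}(F)\to\Sp_{2n}(F)^{m}$ with the block-diagonal embedding $\Sp_{2n}(F)^{m}\hookrightarrow\Sp_{2nm}(F)$.

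It then remains to pull back the metaplectic class of $\Sp_{2nm}(F)$ along this composite. By the standard description of the restriction of the metaplectic cover to a product of symplectic subgroups, the restriction of $\Mp_{2nm}(F)$ to the block-diagonal $\Sp_{2n}(F)^{m}$ is the $\mu_{2}$-amalgam of $m$ copies of $\Mp_{2n}(F)$; as an extension of $\Sp_{2n}(F)^{m}$ by $\mu_{2}$ it has class $\prod_{i=1}^{m}\mathrm{pr}_{i}^{*}c_{F}$, and pulling this back along the diagonal gives $\prod_{i=1}^{m}c_{F}=c_{F}^{m}=c_{F}^{[E:F]}$, as required. I expect the parts requiring genuine care to be the two quoted inputs: the insensitivity of the metaplectic cover to restriction of scalars — which has to be used in the flexible form above, since for an \emph{inseparable} extension $E/F$ the trace form is degenerate and one cannot take $\lambda=\mathrm{Tr}_{E/F}$ — and the amalgam formula for the restriction of $\Mp_{2nm}(F)$ to a product of symplectic subgroups. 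A more computational alternative would be to compare Rao-type $\mu_{2}$-cocycles for $\Sp_{2n}(E)$ and $\Sp_{2n}(F)$ directly: for $a,b\in F^{\times}$ one has $(a,b)_{E}=(a,b)_{F}^{[E:F]}$, because restriction $\mathrm{Br}(F)\to\mathrm{Br}(E)$ multiplies invariants by $[E:F]$, so that the restriction of Rao's cocycle over $E$ to $\Sp_{2n}(F)$ is cohomologous to $c_{F}^{[E:F]}$; on that route the subtle step is instead the behaviour of the Weil-index factor under the extension.
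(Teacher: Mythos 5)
The paper itself offers no argument for this proposition — the proof is a bare citation of Theorem 6.32 of the reference abbreviated \texttt{theta\_book} — so there is no in-text proof to compare against, and your proposal supplies one. The route you take (reduce to computing the class of $p^{-1}(\Sp_{2n}(F))$ in $H^2(\Sp_{2n}(F),\mu_2)\cong\mathbb{Z}/2\mathbb{Z}$; realise $\Mp_{2n}(E)$ inside $\Mp_{2nm}(F)$ via restriction of scalars along a nonzero $F$-linear form $\lambda$; decompose $(W\otimes_F E, B)$ into an orthogonal sum of $m=[E:F]$ copies of a scalar multiple of $\langle\;,\;\rangle$; and pull the metaplectic class back through the diagonal-then-block-diagonal factorisation) is correct, and is in spirit the standard one. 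It is also essentially the argument in the commented-out ``extension of scalars'' section of the paper's source, which carries out the same computation at the level of explicit Rao cocycles rather than cohomology classes. Your cohomological formulation buys cleanliness: you never have to track the Rao function $x(g)$ or the residual Hilbert-symbol twists, only the class $c_F$ and its behaviour under pullback.

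One step should be made explicit, because as phrased it hides the only place where the cocycle picture and the class picture diverge. The assertion ``the restriction of $\Mp_{2nm}(F)$ to the block-diagonal $\Sp_{2n}(F)^m$ is the $\mu_2$-amalgam of $m$ copies of $\Mp_{2n}(F)$'' is true as an isomorphism of central extensions, but it is false at the level of the explicit Rao cocycle, whose restriction to $\Sp_{2n}(F)^m$ carries a nontrivial mixed term $(x(g_k),x(g_{k'}))_F$ between distinct blocks. What makes this mixed term cohomologically invisible — and hence validates your identification of the restricted class with $\prod_i \mathrm{pr}_i^* c_F$ — is that $\Sp_{2n}(F)$ is perfect, so $H^1(\Sp_{2n}(F),\mu_2)=1$ and the K\"unneth cross-terms in $H^2(\Sp_{2n}(F)^m,\mu_2)$ vanish. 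You should name that input, since it is exactly what licenses the ``standard description'' at the level you use it. Everything else you flag is handled correctly: in the inseparable case the trace form is identically zero, so an arbitrary nonzero $\lambda$ is genuinely required; and the summands $W\otimes e_i$ carry the form $\lambda(e_i^2)\langle\;,\;\rangle$, a nonzero scalar multiple of $\langle\;,\;\rangle$, which has literally the same symplectic group and an isomorphic $\mu_2$-cover, so the class on each block is $c_F$ as you claim.
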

\begin{proof}
    This is \cite[Theorem 6.32]{theta_book}.
\end{proof}

Now, we are ready to finish our proof of Proposition \ref{P:conjugacy_assertion_in_Mp}.
\begin{proof}[Proof of Proposition \ref{P:conjugacy_assertion_in_Mp}]
Let $g\in\Sp_{2n}(F)$ be fixed. We know that there exists a purely inseparable extension $F_g/F$ such that $g$ admits the Jordan decomposition in $\Sp_{2n}(F_g)$. Since $F_g$ is purely inseparable over $F$ and $\Char(F)\neq 2$, we know $[F_g:F]$ is odd. By the above proposition, the preimage of $\Sp_{2n}(F)$ in $\Mp_{2n}(F_g)$ is precisely $\Mp_{2n}(F)$; namely we have the following commutative diagram
\[
\begin{tikzcd}
\Mp_{2n}(F)\ar[d]\ar[r, phantom, "\scriptstyle\subseteq"]&\Mp_{2n}(F_g)\ar[d]\\
\Sp_{2n}(F)\ar[r, phantom, "\scriptstyle\subseteq"]&\Sp_{2n}(F_g)\rlap{\ ,}
\end{tikzcd}
\]
where the vertical arrows are the canonical projections.

Now, by Lemma \ref{L:g_in_Sp_admits_Jordan_decomp}, we know that there exists $\gamma\in\GSp_{2n}(F_g)$ with similitude $-1$ such that $\gamma\gt\gamma=\gt^{-1}$. On the other hand, we know that there exists $\delta\in\GSp_{2n}(F)$ with similitude $-1$ such that $\delta g\delta^{-1}=g^{-1}$ by Lemma \ref{lem:conjugation_in_sp_delta}, which gives $\delta\gt\delta^{-1}=\epsilon \gt^{-1}$ for some $\epsilon\in\{\pm 1\}$. Hence inside $\Mp_{2n}(F_g)$ we have
\[
(\delta^{-1}\gamma)\gt(\delta^{-1}\gamma)^{-1}=\epsilon\gt,
\]
which gives $(\delta^{-1}\gamma)g(\delta^{-1}\gamma)^{-1}=g$ inside $\Sp_{2n}(F_g)$. Noting that $\delta^{-1}\gamma\in\Sp_{2n}(F_g)$, we know that $g$ and $\delta^{-1}\gamma$ are two commuting elements in $\Sp_{2n}(F_g)$, which implies that their preimages in $\Mp_{2n}(F_g)$ commute by Lemma \ref{L:two_elements_commute}, so $\epsilon=1$. Thus we have $\delta\gt\delta^{-1}=\gamma\gt\gamma^{-1}=\gt^{-1}$. Since $\delta\in\GSp_{2n}(F)$, the proposition is proven.
\end{proof}

\section{The MVW involution} \label{sec:mvw-involution}

Let $R$ be an algebraically closed field of characteristic $\ell \neq p$. We recall that a representation $(\pi,V)$ of a locally profinite group $G$ is smooth if, for all $v  \in V$, the stabilizer $\{ g \in G \ | \ \pi(g) \cdot v  = v\}$ is open. Equivalently, the map $G \times V \to V$ defining the representation is continuous if we endow $V$ with the discrete topology. We denote by $\textup{Irr}_R(G)$ the set of equivalence classes of smooth irreducible representations with coefficients in $R$. For the metaplectic group $\Mp(W)$, we say that a representation $(\pi, V)$ is genuine if the element $-1 \in \mu_2\subseteq \Mp(W)$ acts as $- \textup{id}_V$. We denote by $\textup{Irr}_R^\textup{gen}(\Mp(W))$ the set of equivalence classes of smooth irreducible genuine representations of $\Mp(W)$. Note that, when $\ell = 2$, a genuine representation actually factors through the symplectic group.

Now, we use Proposition \ref{P:invariant-distributions-Matrysohka} to show:

\begin{theo} For all $\delta \in \textup{GSp}(W)$ with similitude $-1$ and all $\pi \in \textup{Irr}_R(\Mp(W))$, we have $\pi^\delta \simeq \pi^\vee$. \end{theo}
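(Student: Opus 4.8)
The plan is to apply Proposition~\ref{P:invariant-distributions-Matrysohka} to the conjugation action of $G=\Mp(W)$ on $X=\Mp(W)$, which is finite Matryoshka regular by Corollary~\ref{cor:conjugation-in-Mp-is_Matryoshka}, taking for $\sigma$ the homeomorphism
\[
\sigma\colon\Mp(W)\longrightarrow\Mp(W),\qquad \sigma(\tilde g)={}^{\delta}(\tilde g^{-1})=({}^{\delta}\tilde g)^{-1},
\]
where we may and do choose $\delta$ with $\delta^{2}=\mathrm{id}_{W}$ (for instance $+1$ on $X$ and $-1$ on $Y$ for a complete polarisation $W=X\oplus Y$), so that $({}^{\delta})^{2}={}^{\delta^{2}}=\mathrm{id}$ and $\sigma$ is an involutive homeomorphism. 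Once the hypotheses are verified, Proposition~\ref{P:invariant-distributions-Matrysohka} gives that every $\Mp(W)$-conjugation-invariant distribution on $C_{c}^{\infty}(\Mp(W),R)$ is $\sigma$-invariant. Applying this to the distribution character $\chi_{\pi}$ of $\pi\in\mathrm{Irr}_{R}(\Mp(W))$ --- which is defined because irreducible smooth $R$-representations of $\Mp(W)$ are admissible, and is conjugation-invariant because $\Mp(W)$ is unimodular --- and unwinding $\sigma$, we obtain $\chi_{\pi}=\chi_{\pi}\circ\sigma=\chi_{(\pi^{\delta})^{\vee}}$; linear independence of the distribution characters of irreducible $R$-representations then yields $\pi\simeq(\pi^{\delta})^{\vee}$, i.e.\ $\pi^{\vee}\simeq\pi^{\delta}$. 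This is the Gelfand--Kazhdan mechanism \cite{gk,bz}, run here as in \cite[Sec.\ 13.4]{theta_book}.

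It remains to check, for every packet $S$ of the Matryoshka stratification, that $\sigma(S)=S$ and that $\sigma\cdot T=T$ for any nontrivial conjugation-invariant distribution $T$ on $S$. The first condition is immediate: each packet --- a stratum $p^{-1}(X_{i})$, a fibre $p^{-1}(C_{g})$, or an $\Mp(W)$-conjugacy class $C_{\tilde g}$ --- is a union of $\Mp(W)$-conjugacy classes, and Proposition~\ref{P:conjugacy_assertion_in_Mp} applied to $\tilde g^{-1}$ shows $\sigma(\tilde g)={}^{\delta}(\tilde g^{-1})\in C_{\tilde g}$, so $\sigma$ fixes every conjugacy class, hence every packet. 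For the second condition, note that $\sigma$ fixes each individual conjugacy class and that the Matryoshka layers present every intermediate packet as a Hausdorff family of the finer ones (a fibration over $X_{i}/\Sp(W)$, then over a finite set); hence a conjugation-invariant distribution on a packet is $\sigma$-fixed as soon as it is $\sigma$-fixed on each conjugacy class contained in it, and we are reduced to the case of a conjugation-invariant distribution $T$ supported on a single conjugacy class $C_{\tilde g}$.

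Here one argues as follows. First, $\sigma\cdot T$ is again a conjugation-invariant distribution on $C_{\tilde g}$ (because $\sigma$ conjugates ``conjugation by $\tilde h$'' to ``conjugation by ${}^{\delta}\tilde h$'' and ${}^{\delta}$ is an automorphism of $\Mp(W)$), and it is unchanged if $\delta$ is replaced by another similitude-$(-1)$ element, since that replacement alters $\sigma$ only by an inner automorphism. So one may choose $\delta$ (depending on $\tilde g$) with ${}^{\delta}\tilde g=\tilde g^{-1}$ exactly --- such a $\delta$ is produced by the very argument that proves Proposition~\ref{P:conjugacy_assertion_in_Mp} --- whereupon $\tilde g$ is a fixed point of $\sigma$, and, $\delta^{2}$ then centralising $g$, the automorphism ${}^{\delta}$ preserves $Z:=Z_{\Mp(W)}(\tilde g)=p^{-1}(Z_{\Sp(W)}(g))$ (Lemma~\ref{L:two_elements_commute}). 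Under the $\Mp(W)$-equivariant identification $C_{\tilde g}\cong\Mp(W)/Z$, the homeomorphism $\sigma$ becomes the map descended from ${}^{\delta}$; as invariant distributions on a homogeneous space form a space of dimension at most one over $R$, we get $\sigma\cdot T=cT$, where $c$ is the module by which ${}^{\delta}$ scales the invariant measure on $\Mp(W)/Z$, namely the module of ${}^{\delta}$ on $\Mp(W)$ divided by that of ${}^{\delta}$ on $Z$. Both $\Mp(W)$ and $Z$ are unimodular, and on each of them $({}^{\delta})^{2}={}^{\delta^{2}}$ is an inner automorphism; since the module of an automorphism of a locally profinite group is a positive rational (hence determined independently of $R$) and is trivial on inner automorphisms of a unimodular group, both modules equal $1$. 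Therefore $c=1$ and $\sigma\cdot T=T$.

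The genuinely hard input --- that ${}^{\delta}\tilde g$ and $\tilde g^{-1}$ are conjugate --- is Proposition~\ref{P:conjugacy_assertion_in_Mp}, established in Section~\ref{sec:conjugacy-classes-metaplectic-group}; granting it, the remaining difficulty is the interface with a general coefficient field $R$: one must know that $\Mp(W)$ and the centralisers $Z_{\Sp(W)}(g)$ are unimodular (equivalently, that orbital integrals make sense), that smooth irreducible $R$-representations of $\Mp(W)$ are admissible, and that their distribution characters over $R$ are linearly independent --- the last two taking over, in characteristic $\ell\neq p$, from the local-integrability arguments available over $\C$. This, together with the descent of the distributional condition along the Matryoshka layers, is where I would expect the care to be needed.
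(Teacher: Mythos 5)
Your proposal is correct and follows the same route as the paper: apply Proposition~\ref{P:invariant-distributions-Matrysohka} to the conjugation action with $\sigma(\tilde g)=({}^{\delta}\tilde g)^{-1}$, use Corollary~\ref{cor:conjugation-in-Mp-is_Matryoshka} and Proposition~\ref{P:conjugacy_assertion_in_Mp} for the hypotheses, then conclude via the distribution character and linear independence of characters of irreducible $R$-representations. The only real divergence is in verifying $\sigma\cdot T=T$ on a conjugacy class: the paper argues directly that $\sigma^{2}$ is conjugation by $\delta^{2}\in\Sp(W)$, hence $|\sigma|^{2}=1$ with $|\sigma|\in p^{\mathbb{Z}}$, forcing $|\sigma|=1$, whereas you normalize $\delta$ (depending on $\tilde g$) so that ${}^{\delta}\tilde g=\tilde g^{-1}$, identify $C_{\tilde g}\cong\Mp(W)/Z$, and compare moduli of ${}^{\delta}$ on $\Mp(W)$ and on $Z$ --- a more elaborate version of the same idea (``the relevant square is inner, and moduli are positive $p$-powers''). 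One small caveat: your intermediate claim that an invariant distribution on an intermediate packet is $\sigma$-fixed as soon as it is so on the conjugacy classes contained in it is asserted rather than proved; the paper makes the identical reduction without comment, and it is justified for the $\sim_1$-packets because they are finite disjoint unions of open conjugacy classes, but the top-level $\sim_0$ packets require the machinery of \cite[Prop 2.10]{modular_theta} itself rather than a naive decomposition.
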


\begin{proof} By choosing a Haar measure on $G=\Mp(W)$, we let $\mathcal{H}_R(G)$ be the Hecke algebra of the metaplectic group. We want to show that $\textup{tr}_{\pi^\vee}(f) = \textup{tr}_{\pi^\delta}(f)$ for all $f \in \mathcal{H}_R(G)$. Since the metaplectic group is unimodular, we have $\textup{tr}_{\pi^\vee}(f) = \textup{tr}_\pi(f^\vee)$ and $\textup{tr}_{\pi^\delta}(f) = \textup{tr}_\pi(f^\delta)$ where $f^\vee(g) = f(g^{-1})$ and $f^\delta(g) = f(\delta g\delta^{-1})$. Therefore the statement is equivalent to proving $\textup{tr}_\pi(f) = \textup{tr}_\pi(f^\sigma)$ where $\sigma(g) = (\delta g\delta^{-1})^{-1}$ \textit{i.e.} $\textup{tr}_\pi$ is $\sigma$-invariant.

We will use Proposition \ref{P:invariant-distributions-Matrysohka}. Thanks to Corollary \ref{cor:conjugation-in-Mp-is_Matryoshka} and Proposition \ref{P:conjugacy_assertion_in_Mp}, we only have to check that $\sigma$ preserves $G$-invariant distribution on each conjugacy class. Note that, since a single conjugacy class is a homogeneous space, this $G$-invariant distribution is necessarily a quotient Haar measure. But the morphism $\sigma^2$ is the conjugation by $\delta^2 \in \Sp(W)$, so its modulus satisfies $|\delta^2| = 1 = |\sigma|^2$ because $G$ is unimodular. The modulus also belongs to $p^\mathbb{Z}$, so we must have $|\sigma|=1$ \textit{i.e.} $\sigma$ preserves local $G$-invariant distributions. Therefore $\textup{tr}_\pi$ is $\sigma$-invariant and $\textup{tr}_{\pi^\vee} = \textup{tr}_{\pi^\delta}$. Since both $\pi^\vee$ and $\pi^\delta$ are irreducible, this equality of trace-characters implies the result by the linear independence of trace-characters for irreducible representations \cite[Th 1.1.3.7]{trias_thesis}. \end{proof}

\bibliographystyle{alpha}
\bibliography{refer}

\end{document}